\documentclass[12pt,a4paper]{amsart}
\usepackage[utf8]{inputenc}
\usepackage[T1]{fontenc}
\usepackage{amsmath, amssymb}
\usepackage{amsfonts,amsthm,amsmath,amssymb,amscd}
\usepackage{graphicx,color}
\usepackage[dvipsnames]{xcolor}
\usepackage{mdframed}
\usepackage{tikz, pgfplots}
\usetikzlibrary{positioning}
\usetikzlibrary{calc}
\pgfplotsset{compat=1.18}
\def\centerarc[#1](#2)(#3:#4:#5)
    {\draw[#1] ($(#2)+({#5*cos(#3)},{#5*sin(#3)})$) arc (#3:#4:#5);}
\usetikzlibrary {arrows.meta}

\usepackage{enumerate}
\theoremstyle{definition}
\newtheorem{dfn}{Definition}

\theoremstyle{plain}
\newtheorem{lem}[dfn]{Lemma}
\newtheorem{thm}[dfn]{Theorem}

\newtheorem{cor}[dfn]{Corollary}

\newcommand{\C}{{\mathbb C}}
\newcommand{\R}{{\mathbb R}}
\newcommand{\Z}{{\mathbb Z}}
\newcommand{\N}{{\mathbb N}}
\renewcommand{\Re}{\operatorname{Re}}
\renewcommand{\Im}{\operatorname{Im}}

\newcommand{\diam}{\operatorname{diam}}

\newcommand{\bd}{\operatorname{bd}}
\newcommand{\ubd}{\operatorname{ubd}}

\newcommand{\La}{\Lambda_\mathbf{c}}
\renewcommand{\c}{\mathbf{c}}
\newcommand{\intt}{\operatorname{int}}

\begin{document}
\begin{abstract}
We study the dynamics of the exponential map on the complex plane. The set $\Lambda_{\c}$ of all points sharing a given itinerary $\c$ is non-empty if and only if $\c$ is an exponentially bounded itinerary. For such itineraries, $\Lambda_{\c}$ also contains a curve of escaping points, and hence its Hausdorff dimension is at least~$1$. We prove that for every exponentially bounded itinerary this dimension is in fact equal to~$1$. In comparison, for certain itineraries, the set $\Lambda_{\c}$ exhibits highly complicated topological structures, such as indecomposable continua.
\end{abstract}

\address{Joanna Horbaczewska, University of Warsaw, ul. Banacha 2, 02-097 Warsaw, Poland}
\email{j.horbaczewska@uw.edu.pl}
\author{Joanna Horbaczewska}

\address{Rados\l{}aw Opoka, University of Warsaw, ul. Banacha 2, 02-097 Warsaw, Poland}
\email{r.opoka@student.uw.edu.pl}
\author{Rados\l{}aw Opoka}

\address{{\L}ukasz Pawelec, Institute of  Mathematical Economics, SGH Warsaw School of Economics, al.~Niepodleg\l{}o\'{s}ci~162, 02-554 Warszawa, Poland}
\email{lpawel@sgh.waw.pl}
\author{\L{}ukasz Pawelec}
\thanks{The research of J. Horbaczewska and R. Opoka was funded in whole or in part by the National Science Centre, Poland, grant no. 2023/51/B/ST1/00946. The research of Ł. Pawelec was funded in whole or in part by the National Science Centre, Poland, grant no. 2023/49/B/ST1/03015.}

\title[Dimension of itineraries for $\exp(z)$]{Dimension of the accumulation set of any hair for the exponential map}
\maketitle

\section{Introduction}

We study the dynamical systems generated by the functions $f_\lambda\colon\C\longrightarrow\C$ defined by
\[f_\lambda(z)=\lambda e^z,\quad \lambda\in\C\setminus\{0\}.\]

The structure of the Julia set for this family depends strongly on the parameter $\lambda$. For example, if $0<\lambda\leqslant\frac{1}{e}$, then the Julia set has a topological structure of the so-called Cantor bouquet and consists of uncountably many curves, each of which can be coded by the behaviour of orbits of points within the curve. This led to the notion of an itinerary (see below for the definition). 

Even when the Julia set is the entire plane (which happens for example when $\lambda>\frac{1}{e}$) it still dissolves into sets with a given itinerary. Such sets may have an interesting topological structure as discussed briefly below, however, in this text we will be interested in its Hausdorff dimension, which is known to behave \emph{paradoxically} for the exponential map -- for classic example, see \cite{Karp}. 

For this purpose, we use the symbolic dynamics defined by the following construction. First, we divide the plane $\C$ into horizontal strips 
\begin{equation}\label{strips2}
P_k=\{z\in\C:(2k-1)\pi<\Im(z)\leqslant(2k+1)\pi\}.
\end{equation}
Observe that every set $P_k$ is mapped bijectively onto $\C\setminus\{0\}$, and the image of the boundary of $P_k$ is the negative real axis.
Then, to every $z\in\C$ we assign a sequence of integers $\c=c_0c_1\ldots$ is such a way that $f^n(z)\in P_{c_n}$ for every $n\in\N$. We call such a sequence \emph{the itinerary of $z$.}
The set of all points with a given itinerary is denoted as $\Lambda_\c$. 
This set can be divided into three disjoint parts further depending on the behaviour of trajectories of points: points with a bounded orbit, points with an unbounded orbit but not going to infinity, and points escaping to infinity:
\[\Lambda_\c = \Lambda_\c^{\bd} \cup  \Lambda_\c^{\ubd} \cup \Lambda_\c^{\infty}.\]

R. Devaney and M. Krych  proved in \cite{DK} that for $\lambda>0$ the set $\Lambda_\c$ is nonempty if and only if the itinerary $\c$ is \emph{exponentially bounded}, i.e. there exists $x\in\R$ such that for every $n\in\N$ we have $2\pi|c_n|\leqslant f_\lambda^n(x)$.
They proved, in fact, that for exponentially bounded itineraries $\c$ the set $\Lambda_\c^\infty$ contains a curve tending to the point at infinity. We call that curve a tail of a hair or the tail of a ray.
This result was generalized by D. Schleicher and J. Zimmer in \cite{SZ}. First, they changed the definition of exponentially bounded itinerary in such a way that the condition in that definition does not depend on $\lambda$: $\c$ is exponentially bounded if 
\[\exists_{A,x>0} \forall_{k\in\N}\hspace{1mm}|c_k|<AF^k(x),\] where the map $F\colon\R\longrightarrow \R$ is defined as $F(t) = e^t - 1$. With that new definition they proved that for every $\lambda\in\C\setminus\{0\}$ the set $\Lambda_\c^\infty$ contains a curve tending to the point at infinity (called the tail of a hair). Moreover they proved that for every tail there exists a maximal curve which extends that tail (we call that extended curve hair) and any escaping point (i.e. $z\in\C$ such that $f_\lambda^n(z)\xrightarrow{n\to\infty}\infty$) is an element of the unique hair, or is a landing point (endpoint) of the unique hair, or maps after finitely many iterations on the hair on which the singular value 0 lies. The third case only occurs when the singular value escapes.
M.~Viana in \cite{VIA} showed that those tails of the hairs (for the complex exponential family) are smooth $C^{\infty}$-curves.

However, the topological structure of $\Lambda_{\c}$ may be much more complicated, as it contains (in most cases is equal to) the closure of the entire hair. The full catalogue of possibilities (depending on $\lambda$ and on $\c$) is still unknown. 
For some itineraries the set $\Lambda_{\c}$ consists only of a hair together with its endpoint, i.e. $\Lambda_\c^{\ubd}=\emptyset$. Such a situation occurs, for instance, for so-called fast itineraries, or for itineraries with finitely many $0$'s as entries (see \cite{SZ}). In the former case the landing point of the hair is necessarily escaping, while in the latter it is not, but can have either a bounded or an unbounded trajectory. 
On the other hand, the set $\Lambda_{\c}$ can be a very complicated topological space.
R.~Devaney proved in \cite{D} that for $\lambda > \frac{1}{e}$ there is a natural compactification of the set (the symmetric upper half-plane part of the set $\Lambda_{000\ldots}$) 
\[
\{z \in \C : \forall_{n \in \N} \quad 0 \leqslant \Im(f_\lambda^n(z)) \leqslant \pi\} \subseteq \Lambda_{00\ldots},
\]
which is an \textit{indecomposable continuum}. Later, R.~Devaney and X.~Jarque proved in \cite{DJ} that for $\lambda > \frac{1}{e}$ and bounded itineraries of the form
\[
s = 0_{n_0} t_0 0_{n_1} t_1 0_{n_2} t_2 \ldots,
\]
where $t_0, t_1, \ldots$ are finite blocks of integers in which at least one term is non-zero, and $0_{n_k}$ is a finite block of zeros of length $n_k$, the set $\Lambda_{\c}$ is an indecomposable continuum, provided that the lengths of the zero blocks are large enough. This is a consequence of the fact that for such itineraries $\Lambda_{\c}$ is the closure of $\Lambda_{\c}^{\infty}$, which, in this case, is a very tangled curve that \textit{accumulates everywhere on itself}. The second-named author found similar phenomena for unbounded itineraries (see \cite{RO}). There are also examples in the general context of $\lambda \in \C$ for which the asymptotic value $0$ is an escaping point (see \cite{REM}). 

For $\lambda = 2\pi i$ (a Misiurewicz parameter) one can obtain an indecomposable continuum as a result of two hairs accumulating on themselves instead of one (see \cite{DJR}); one can also construct an itinerary for which the closure of the hair is a bounded indecomposable continuum in the complex plane, which may contain part of the hair or be disjoint from it -- for instance, it can be a topological sine curve (see \cite{FZ}). In all these cases the constructed itineraries are of a specific block form.

Since for exponentially bounded itineraries $\c$ the set $\Lambda_{\c}$ contains a smooth curve, the Hausdorff dimension of $\Lambda_{\c}$ is at least $1$. It seems that in case of complicated topological structures this lower bound should not be equal the dimension, however, this is not the case. A.~Zdunik with the third-named author proved in \cite{PZ} that for bounded itineraries the Hausdorff dimension of $\Lambda_{\c}$ is at most 1, meaning that (for bounded itineraries $\c$) no matter how simple or how complicated $\Lambda_{\c}$ is topologically, its Hausdorff dimension equals $1$.  We extend this result to all exponentially bounded itineraries, and prove the following.

\begin{thm}\label{thm1}

For $\lambda =1$, and for any itinerary $\c$ we have 
\[\dim_H\left(\Lambda_\c\right)\leqslant 1.\]
\end{thm}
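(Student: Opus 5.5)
Throughout, $f=f_1=\exp$. We may assume $\c$ is exponentially bounded, since otherwise $\Lambda_\c=\emptyset$ by the results of Devaney--Krych and Schleicher--Zimmer. The plan is to write $\Lambda_\c$ as a countable union of pieces whose dimension is at most $1$ and then use countable stability of $\dim_H$. The split follows the trichotomy $\Lambda_\c=\Lambda_\c^{\bd}\cup\Lambda_\c^{\ubd}\cup\Lambda_\c^{\infty}$, together with a further decomposition according to how the orbit visits the left half-plane $\{\Re z<-M\}$.

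\emph{Step 1: escaping and bounded-orbit parts.} By Schleicher--Zimmer, $\Lambda_\c^\infty$ is contained in the union of the unique hair with itinerary $\c$ (a countable union of rectifiable, indeed $C^\infty$, arcs), at most one endpoint, and the countably many preimages of the hair through $0$. This last set is empty here: for $\lambda=1$ the singular value $0$ does not escape, since its orbit $0,1,e,\dots$ tends to $+\infty$ only along $\R$. So $\dim_H\Lambda_\c^\infty\le1$.

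For a bounded orbit lying in $\{|z|\le R\}$, every forward iterate meets only finitely many strips. One then runs the argument of \cite{PZ}: cover $\Lambda_\c\cap\{\text{orbit in } \overline{D(0,R)}\}$ by pullbacks along the itinerary of squares of side $\sim 1$. The pulled-back pieces are nested and have bounded distortion wherever $|(f^n)'|$ is large. Because the itinerary fixes a single branch of each pullback, at each step the number of pieces grows only by a factor comparable to the ratio of sizes. This gives a cover in which the $t$-sums are bounded for every $t>1$.

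\emph{Step 2: the general (unbounded) case, which I expect to be the main obstacle.} Fix $z\in\Lambda_\c$ and write $x_n=\Re f^n(z)$. The key observation is that the itinerary pins down the imaginary part, $|\Im f^n(z)-2\pi c_n|\le\pi$. Hence if $x_n$ is large then $|f^{n+1}(z)|=e^{x_n}$ is large while $\Im f^{n+1}(z)\approx 2\pi c_{n+1}$ is controlled by $c_{n+1}$. Consequently $f^{n+1}(z)$ lies either in a thin region near the real direction (far right) or in the far left half-plane, with modulus about $e^{x_n}$.

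I would set up a Bowen-type cover at scales where the derivative $|(f^n)'(z)|=\prod_{j<n} e^{x_j}$ is large. Each piece of $\Lambda_\c$ in a unit square $Q$ near $f^n(z)$ pulls back univalently along the itinerary, with Koebe distortion, to a set of diameter $\asymp|(f^n)'|^{-1}$. The counting is the delicate part:
\begin{itemize}
\item At a point with $x_n\ll0$, the next point lies in $D(0,e^{x_n})$. Covering this set costs only one square, but it contracts by $e^{x_n}$.
\item At a point with $x_n\gg0$, the image of a unit square is an annular sector of width $\sim e^{x_n}$. It meets the strip $P_{c_{n+1}}$ in about $e^{x_n}$ unit squares, which is exactly compensated by the derivative factor $e^{x_n}$.
\end{itemize}
Thus in the $t$-sum every step contributes a factor of about $e^{x_n}\cdot e^{-t x_n}\le1$ for $t>1$, when $x_n>0$. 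When $x_n<0$ the factor is $e^{-t x_n}$ times $1$ square, and this is the problematic case.

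To handle it I would use that after a far-left visit the orbit lands near $0$, and then follows the orbit of $0$ (real, escaping) for a while with an expanding derivative of size about $F^k$. This is where exponential boundedness of $\c$ and the choice $\lambda=1$ enter. I would therefore group each left excursion with the subsequent stretch near $0,1,e,\dots$ into one block, and show that the total block contribution $\sum(\#\text{pieces})\,\diam^t$ is at most $1$ for $t>1$ once $M$ is large. I would then stratify $\Lambda_\c$ by the first time after which all blocks are of this controlled type, giving a countable union. The hard part is the estimate on the left excursions; if it fails uniformly, I would instead separate out points that visit $\{\Re z<-M\}$ infinitely often. Such points have orbits accumulating at $0$'s orbit, so they are covered by shrinking discs, and I would try to bound their dimension directly by $1$ via the same block counting.
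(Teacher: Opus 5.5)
\textbf{Step~1.} This is fine in substance, but one justification is false. For $\lambda=1$ the singular value \emph{does} escape ($0\mapsto1\mapsto e\mapsto e^e\mapsto\cdots\to\infty$), so the third Schleicher--Zimmer case does occur; for instance, the lines $\Im z=(2k+1)\pi$ map onto $\R_-$ and then escape. This is harmless, because for fixed $\c$ those points still lie on countably many analytic arcs.

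\textbf{Step~2 is the genuine gap.} It is the entire content of the theorem: $\Lambda_\c^{\ubd}$ is where the indecomposable continua live. You only sketch a heuristic, leave the key estimate open, and offer a fallback that reduces to the same unproved counting. The heuristic also does not close as stated.
\begin{itemize}
\item The per-rectangle weight is not $e^{x(1-t)}\leqslant1$ but $Ce^{-\delta x}$ with $C>1$. The constant comes from counting and distortion: about $Ae^{x}$ pieces, each of diameter $\leqslant Be^{-x}$.
\item The image of a piece spreads over many next positions, so what must be $\leqslant1$ is a \emph{sum over positions}. Constants $>1$ compounded along the levels of the cover destroy any bound. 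They can only be absorbed if the ``free'' counting is restricted to $\Re z\geqslant M$, with $M$ large enough that $Ce^{-\lfloor M\rfloor\delta}/(1-e^{-\delta})$ is small.
\item Hence the bad region is the whole strip $|\Re z|<M$, not just $\Re z<-M$. There $|f'|=e^{\Re z}$ can be $<1$. Orbits return to this strip after every far-left visit and can linger in it arbitrarily long (long blocks of $0$'s, as in the Devaney--Jarque examples).
\item Near the negative real axis and near $0$, your Koebe claim for pulled-back unit squares fails. The inverse branches along $\c$ are discontinuous along the cut $\R_-$, and near $0$ they are univalent only on small discs.
\item No stratification by ``the time after which all blocks are controlled'' avoids this.
\item Your dichotomy ``$f^{n+1}(z)$ is far right or far left'' is also false when $2\pi|c_{n+1}|$ is comparable to $e^{x_n}$, which exponential boundedness permits.
\end{itemize}

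\textbf{What the paper uses to close the argument.} It builds a single nested cover by induction, uniformly in the itinerary, with two hypotheses: the total $(1+\delta)$-sum is $\leqslant1$, and the part inside $R_k^{c_0}$ for $k\geqslant\lfloor M\rfloor$ is $\leqslant Ce^{-k\delta}$.

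Your grouping of far-left excursions is the right idea, but the mechanism is specific. On $R_{-k}$ the map $F=f^{t_k+1}$ has bounded distortion and $\diam F(R_{-k})\geqslant\pi/6$, so $|F'|$ is merely bounded below by a constant, not large. The gain comes from $F(R_{-k})\subseteq\{\Re z>k\}$: the \emph{localized} hypothesis then gives a block cost $\lesssim e^{-k\delta}$, which is summable in $k$.

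The middle strip requires two ingredients your plan lacks.
\begin{itemize}
\item A uniform expansion $|(f^{\tau+j})'|\geqslant\eta^{\tau+j}$ (with $\eta=1.1$, $j\leqslant2$) up to the first entry into $\Re z\geqslant M$. This needs, for example, the explicit check $|(f^4)'(z)|\geqslant\eta^4$ for $-3\leqslant\Re z<\ln\eta$.
\item Multiplicity control. For fixed $\c$ and escape time $\tau$ there is a single escape-time set. It splits into at most two pieces because its orbit meets the cut $\R_-$ at most once before escaping; this is where the choice $M=f^{N_M}(0)$ is used.
\end{itemize}
Together these turn the middle contribution into a geometric series $\sum_\tau\eta^{-\tau(1+\delta)}$ times the right-half-plane sum. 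Without them your block counting cannot control the middle strip.
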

This result together with aforementioned knowledge that any non-empty set $\Lambda_{\c}$ contains a curve leads to the following.
\begin{cor}
For the map $e^z$, whenever $\Lambda_\c$ is non-empty, which happens if and only if the itinerary is exponentially bounded,
\[\dim_H\left(\Lambda_\c\right)= 1.\]
\end{cor}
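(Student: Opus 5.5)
We aim to show $\dim_H(\Lambda_\c)\leqslant 1$ by splitting $\Lambda_\c$ into the set of points whose orbits stay in a bounded region infinitely often and the part escaping to the right, and covering each piece efficiently. Fix $\c$ (otherwise $\Lambda_\c=\emptyset$ and there is nothing to prove). Since a countable union does not raise Hausdorff dimension, it suffices to show that for every $t>1$ and every compact piece of $\Lambda_\c$ the $t$-dimensional Hausdorff measure vanishes. Write $\Lambda_\c=\bigcup_{M}\Lambda_\c\cap\{\Re z\leqslant M\}$ and work in a fixed half-plane; points far to the left are mapped near $0$, so they reduce to the other case after one iterate, using that $e^z$ is locally bi-Lipschitz.

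First I would treat the points whose orbit visits a fixed compact set $K=\{|\Re z|\leqslant R\}\cap P_{c_n}$ infinitely often. At a time $n$ when $f^n(z)\in K$, take the inverse branch $g_{\c,n}$ of $f^n$ determined by the itinerary $c_0\ldots c_{n-1}$. The set of such $z$ lies in $g_{\c,n}(\Lambda_{\sigma^n\c}\cap K)$. The key observation, exactly as in the bounded-itinerary argument of \cite{PZ}, is that within a single strip the admissible points of $\Lambda_{\sigma^n\c}\cap K$ are already confined: the next coordinate $c_{n+1}$ forces $\Im f^{n+1}$ into a strip of height $2\pi$. Hence $\{w\in K\cap P_{c_n}: e^w\in P_{c_{n+1}}\}$ is a union of thin sets. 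Repeating this for $m$ further steps, it is covered by roughly $e^{Cm}$ sets whose diameters are $\asymp e^{-S_m}$, where $S_m=\sum \Re$ of the orbit. Thus the covering counts grow only like the derivative in one direction. So in $K$, the set with prescribed itinerary of length $m$ is a thin tube of width about $|(f^m)'|^{-1}$ and length $O(1)$, covered by about $|(f^m)'|^{\,1}$ balls of that size, giving $t$-sums of order $|(f^m)'|^{1-t}\to 0$. Pulling back by $g_{\c,n}$ with Koebe distortion (the branch extends univalently to a slightly larger strip neighbourhood) preserves this estimate up to constants.

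Second, the remaining points satisfy $\Re f^n(z)\to\infty$ or eventually stay in $\{|\Re|>R\}$. Points with $\Re f^n(z)<-R$ land in a tiny disc around $0$ at time $n+1$, so this reduces to the first case. Points eventually in $\Re>R$ lie in the escaping part. There I would use the standard estimate that the set of points in $P_{c_n}\cap\{\Re>R\}$ whose image lies in $P_{c_{n+1}}$ is, for $R$ large, within distance $e^{-R}$-ish of a horizontal curve $\Im w\approx 2\pi c_n$. Iterating shows that the points staying to the right with itinerary $\c$ lie in shrinking neighbourhoods of the tail of the hair, with widths decaying superexponentially relative to their lengths. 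A covering by balls of size equal to the width, numbering about length/width, gives $t$-sums tending to zero for $t>1$.

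The main obstacle is the unbounded, non-escaping part $\Lambda_\c^{\ubd}$. Its orbits alternate between excursions far to the right, where $|c_n|$ may be huge, and returns. The returns may land only near $\Re\approx$ moderate values with large $|\Im|$, so they are not in a fixed compact set, and the distortion constants and covering counts must be uniform in the height $|c_n|$. I would handle this by noting that the estimate in the first step uses only the real parts $\Re f^k$, not the imaginary parts. The count of covering sets depends on the length of the tube in the real direction, which is bounded by $M$ independent of $c_n$, so the estimate is translation-invariant in $\Im$. I would then define $K_M=\{|\Re z|\leqslant M\}$, without any bound on $\Im$, and run the first-case argument with $K_M$. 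Any point with $\liminf\Re f^n<\infty$ belongs to some such case, and the others escape and are covered by the second case.
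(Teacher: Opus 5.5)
You never prove the lower bound $\dim_H\Lambda_\c\geqslant 1$, but it is immediate from the cited fact that $\Lambda_\c$ contains the tail of a hair. Your treatment of the eventually-right (escaping) part is essentially the paper's right half-plane count: about $e^k$ pieces of diameter about $e^{-k}$ in each $R_k^{c_0}$, giving $e^{-k\delta}$ per rectangle.

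The genuine gap is your first case, the points returning infinitely often to $K=\{|\Re z|\leqslant R\}$. It contains all of $\Lambda_\c^{\ubd}$, so it carries the entire difficulty, and the argument you give for it fails at three points.

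(a) The ``thin set'' claim fails where $\Re w<\log\pi$. There $e^w\in B(0,\pi)\subset P_0$, so $c_{n+1}=0$ imposes no constraint, and $|f'(w)|=e^{\Re w}<1$, so iterating contracts rather than thins. Moreover $|(f^m)'|$ is far from constant on a cylinder: one of its factors is $|f^m(z)|$, which takes both tiny and enormous values there. So ``a tube of width $|(f^m)'|^{-1}$ covered by $|(f^m)'|$ balls'' is not a meaningful estimate.

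(b) The branch $g_{\c,n}$ does not extend univalently to a neighbourhood of $K\cap P_{c_n}$ when $c_n=0$. That set contains $0,1,e,\ldots$, which are singular values of $f^n$, and near $0$ the branch blows a small disc up into an unbounded left half-strip. So Koebe cannot be applied as you state.

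(c) To get $\mathcal{H}^t=0$ for a limsup set you must sum over return times $n\geqslant N$, which needs $g_{\c,n}$ to contract exponentially in $n$. But a return to $K$ can come right after an excursion to $\Re\approx-e^{x}$. At that moment $f^n(z)$ is within $e^{-e^x}$ of $0$ and $|(f^n)'(z)|$ has just been multiplied by $e^{-e^x}$.

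Returns to a compact set are therefore the wrong stopping times. Translation invariance in $\Im$ addresses none of (a)--(c): the obstruction is contraction and distortion along the postsingular orbit $0,1,e,\ldots$, not the size of $|c_n|$.

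What is missing is the paper's inducing scheme, which stops at the first entrance into $H_M^+$:
\begin{enumerate}
\item For far-left rectangles, the orbit shadows $0,1,e,\ldots$ (Lemma \ref{lem:from_left_to_right}). Hence $f^{t_k+1}$ maps $R^{c_0}_{-k}$ with distortion at most $D$ (Lemma \ref{lem:distortion}) onto a set of diameter at least $\pi/6$ inside $H^+_k$.
\item For $|\Re z|<M$, it uses the escape-time sets. A fixed itinerary produces only one such set, $E_p^{\beta(\c,p)}$, for each $p$, and the branch cut splits it into at most two pieces (Lemmas \ref{lem:E_split_only_once} and \ref{lem:inter}). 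On these pieces $|(f^{p+j})'|\geqslant\eta^{p+j}$ (Lemma \ref{lem:eta^k}).
\end{enumerate}
This combination of counting and uniform expansion turns the middle contributions into a geometric series $\sum_p 6\,\eta^{-p(1+\delta)}$, which closes the self-similar induction. Without an estimate of this kind, your first case, and with it the upper bound $\dim_H\Lambda_\c\leqslant 1$, remains unproved.
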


In this paper we limit ourselves to the case $\lambda =1$. It seems that all the techniques work for any $\lambda>\frac1e$, but we did not want to obfuscate the proof with unwanted technical difficulties. As some of the estimates in the course of the proof are subtle, we decided to prove with extra attention to the constants and estimates, which unfortunately makes the proof look more technical then it should.

It should be noted that unfortunately our method does not work for $\lambda \notin \R$, notably for a very interesting case of $\lambda = 2\pi i$.

\section{Definitions and notations}

We use the notation $\N =\{0,1,2,\ldots\}$.
For $M\geqslant 0$ we define closed half-planes
\[H_M^{+}=\{z:\Re z \geqslant M\} \mbox{\qquad and \qquad} H_M^{-}=\{z:\Re z \leqslant M\},\]
open half-planes
\[\tilde{H}_M^{+}=\{z:\Re z > M\} \mbox{\qquad and \qquad} \tilde{H}_M^{-}=\{z: \Re z < M\},\]
and we also define $H_M^{\text{mid}} = \{z:|\Re(z)| < M\}$.
We denote a rectangle $R_k^p = \{z\in P_p: \Re z \in[k,k+1)\}$. We denote by $B(z,r)$ the open disk with the centre at $z\in\C$ and the radius $r>0$.

Throughout the text we will denote by $f_k^{-1}$ the branch of $f^{-1}$ into the strip $P_k$. We define the \textit{shift-map} $\sigma$ on itineraries such that 
\[\sigma(c_0c_1c_2\ldots) = c_1c_2\ldots,\]
and therefore if $z$ has an itinerary $\c$ then $f(z)$ has an itinerary $\sigma(\c)$.

We will use the following version of the Koebe Distortion Theorem (see \cite{POM}).
\begin{thm}\label{thm:Koebe}
For any map holomorphic injective map $\varphi\colon B(a,r) \to \C$ we have

\[\frac{1-\theta}{(1+\theta)^3} |\varphi'(a)|\leqslant |\varphi'(z)| \leqslant  \frac{1+\theta}{(1-\theta)^3}|\varphi'(a)|,\]
where $\theta = \frac{|z-a|}{r}$.
\end{thm}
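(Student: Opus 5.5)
The plan is to reduce the statement to the classical distortion theorem for the normalized schlicht class $S$ of injective holomorphic maps $g$ of the unit disk $\mathbb{D}$ with $g(0)=0$ and $g'(0)=1$, and then to prove that theorem from Bieberbach's coefficient bound $|a_2|\leqslant 2$.

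\emph{Step 1 (normalization).} Since $\varphi$ is injective, $\varphi'(a)\neq 0$. Define
\[g(w)=\frac{\varphi(a+rw)-\varphi(a)}{r\varphi'(a)},\qquad w\in\mathbb{D}.\]
Then $g$ is injective and holomorphic, with $g(0)=0$ and $g'(0)=1$, so $g\in S$. Moreover $\varphi'(a+rw)=\varphi'(a)\,g'(w)$. Put $z=a+rw$, so that $|w|=\theta$. The claimed inequality is then exactly
\[\frac{1-|w|}{(1+|w|)^3}\leqslant |g'(w)|\leqslant \frac{1+|w|}{(1-|w|)^3}\qquad\text{for } g\in S,\ w\in\mathbb{D}.\]

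\emph{Step 2 (the key differential inequality).} Fix $w_0\in\mathbb{D}$ and form the Koebe transform
\[h(\zeta)=\frac{g\!\left(\frac{\zeta+w_0}{1+\overline{w_0}\zeta}\right)-g(w_0)}{(1-|w_0|^2)\,g'(w_0)}.\]
This is a composition of $g$ with a disk automorphism followed by an affine map, so $h\in S$. A direct computation of its second Taylor coefficient gives
\[a_2(h)=\tfrac12\Big((1-|w_0|^2)\tfrac{g''(w_0)}{g'(w_0)}-2\overline{w_0}\Big).\]
Applying Bieberbach's theorem $|a_2(h)|\leqslant 2$ (a standard consequence of the area theorem applied to $\sqrt{h(\zeta^2)}$ inverted), I get, after multiplying by $|w_0|/(1-|w_0|^2)$,
\[\left|\frac{w g''(w)}{g'(w)}-\frac{2|w|^2}{1-|w|^2}\right|\leqslant\frac{4|w|}{1-|w|^2}\qquad (w\in\mathbb{D}).\]

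\emph{Step 3 (integration).} Write $w=\rho e^{it}$. Since $g'$ does not vanish, $\log g'$ has a holomorphic branch with value $0$ at $0$. Its radial derivative satisfies $\rho\,\partial_\rho \log|g'(\rho e^{it})| = \Re\big(w g''(w)/g'(w)\big)$. Step 2 therefore yields
\[\frac{2\rho-4}{1-\rho^2}\leqslant\partial_\rho\log|g'(\rho e^{it})|\leqslant\frac{2\rho+4}{1-\rho^2}.\]
Integrating from $0$ to $|w|$ and using $\log|g'(0)|=0$ gives
\[\log\frac{1-|w|}{(1+|w|)^3}\leqslant \log|g'(w)|\leqslant\log\frac{1+|w|}{(1-|w|)^3},\]
which is the normalized inequality. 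Undoing Step 1 then completes the proof.

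The only nontrivial ingredient is Bieberbach's inequality $|a_2|\leqslant2$ in Step 2; I would cite it, as the paper does by referring to \cite{POM}. The remaining delicate point is the algebra for $a_2(h)$, in particular getting the $-2\overline{w_0}$ term from the derivative of the Möbius map. Everything else is routine.
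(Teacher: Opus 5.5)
Your proof is correct, and your computations of $a_2(h)$, of the bound $\bigl|(1-|w|^2)g''(w)/g'(w)-2\overline{w}\bigr|\leqslant 4$ and of the two radial integrals all check out. It is the standard argument (rescale to the class $S$, apply Bieberbach's $|a_2|\leqslant 2$ to the Koebe transform, integrate radially), which is the same route as the paper, since the paper gives no proof of its own and simply cites Pommerenke \cite{POM}, where the theorem is proved this way.
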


\section{Proof of the Main Theorem}
Start by observing that $\Lambda^{\bd}_\c$ consists of at most 2 points. 
For unbounded itineraries this set is trivially empty. A bounded itinerary not ending in a sequence of $0$'s has exactly one such point by \cite[Theorem 6.1]{DJ}. For an itinerary composed of only $0$'s there exist two points with bounded orbit by \cite[Theorem 4.9]{D} (those are repelling fixed points), therefore for every itinerary which ends in only $0$'s there exists two such points. 

Thus, when studying the Hausdorff dimension we may assume that every point from $\La$ has an unbounded trajectory.

To get our result we fix $\delta>0$ and show that there exists some large $M$ and a family of coverings $\mathcal{F}_{\c}^{n}$ (depending on the itinerary and on $M$) such that 

\begin{align*}
&\sup_{Q\in\mathcal{F}_{\c}^{n}}\diam(Q) \to 0 \mbox{\quad as $n\to +\infty$},\\
&\sum_{Q\in\mathcal{F}_{\c}^{n}} (\diam Q)^{1+\delta} \leqslant 1 \mbox{\quad for all $n\in \N$},
\end{align*}
which proves that $\dim_H(\La) \leqslant 1+\delta$. Letting $\delta$ go to zero finishes the proof.

We will choose our $M$ from the orbit of 0. More precisely, let $M>0$ be such that there exists $N_M\in\N$ satisfying $M=f^{N_M}(0)$. It needs to be big enough to satisfy the following
\begin{equation}\label{ineq:<<1}
\frac{e^{-M\delta}}{1-e^{-\delta}} \ll 1
\end{equation}
for $\delta$ small enough. We will additionally assume that $M> 100$ for some basic estimates to hold. We will also use a constant $\eta>1$. For precision of estimates we have taken $\eta=1.1$. 

From this point on, we assume that $\delta$ and $M$ are fixed. We also assume that $\La$ is non-empty. Throughout the proof, whenever it does not lead to confusion, in notation we will drop the subscript $\c$.

Our coverings will be defined differently depending on the position in the plane -- the far-right part, the far-left part and the middle part. Every part will need different treatment. 

As mentioned in the introduction, A. Zdunik with the third-named author proved a similar result for bounded itineraries -- some methods from that proof will be used also here, but only for the left/right parts. The middle one requires completely new techniques. Let us start with some results about the behaviour of the map in $H_M^{\text{mid}}$.

\subsection{Escape time and escape sets in the middle of the plane}\label{subs:mid}

Recall that we may assume that the orbit of any point $z\in\La$ is unbounded, so $\Re(f^k(z)) \geqslant M$ for some $k$. Therefore, let us define escape time
\[\tau_M(z)= \min\{k\geqslant 0 : f^k(z)\in H_M^+\} < +\infty.\]
Note that if the trajectory of $z \in \tilde{H}_M^{-}$ leaves $\tilde{H}_M^{-}$ it has to do it via  $E_0:=B(0,e^M)\cap H_M^+$. Thus, if $\Re(z) < M$, then the escape time $\tau_M$ can be characterized as
\[\tau_M(z)= \min\{k\geqslant 1 : f^k(z)\in E_0\}.\]
Since we fixed $M$ we write $\tau(z)$ instead of $\tau_M(z)$ for simplicity. Let us divide $\tilde{H}_M^{-}$ into sets with a given escape time. 
\begin{dfn}
For $k >0$ let $E_k$ be defined as
\[E_k = \{z\in \tilde{H}_M^{-} : \tau(z) = k \}.\] 
\end{dfn}
Note that for any $k> 0$ we have $f(E_k) \subseteq E_{k-1}$ (using the definition of $E_0$ above). We also have to partition every $E_k$ depending on the itineraries of the points in it.

\begin{dfn}
For $k\geqslant 1$ and the finite sequence of integers $a_0, a_1,\ldots, a_{k-1}$ let $E_k^{a_0, a_1,\ldots, a_{k-1}}$ be the set of all $z\in E_k$ such that first $k$ entries of the itinerary of $z$ are $a_0, a_1,\ldots, a_{k-1}$, i.e. 
\[E_k^{a_0, a_1,\ldots, a_{k-1}} = \{ z\in \tilde{H}_M^{-} : f^j(z)\in P_{a_j}\cap \tilde{H}_M^{-} \text{ for } 0\leqslant j \leqslant k-1,\;f^k(z)\in H_M^+\}.\]
\end{dfn}

For every $k\geqslant 1$, every sequence $a_0, a_1,\ldots, a_{k-1}$, and every $z\in E_k^{a_0, a_1,\ldots, a_{k-1}}$ we have $f^k(z)\in E_0$. Therefore, we can characterize sets $E_k^{a_0, a_1,\ldots, a_{k-1}}$ by an inductive scheme
\[E_1^{a_0} = f_{a_0}^{-1}(E_0)\setminus E_0,\]
recall that $f_{a_0}^{-1}$ is a branch of $f^{-1}$ with values in $P_{a_0}$, and
\[E_{k+1}^{a_0, a_1,\ldots, a_{k}} = f_{a_0}^{-1}(E_{k}^{a_1, a_2\ldots, a_{k}})\setminus E_0,\]
for every $k\geqslant 1$ unless $0\in E_{k}^{a_1, a_2\ldots, a_{k}}$. If $0\in E_{k}^{a_1, a_2\ldots, a_{k}}$ then 
\[E_{k+1}^{a_0, a_1,\ldots, a_{k}} = f_{a_0}^{-1}(E_{k}^{a_1, a_2\ldots, a_{k}}\setminus \{0\})\setminus E_0.\]
Moreover if $0\in E_{k}^{a_1, a_2\ldots, a_{k}}$ then $f^k(0)=M$, i.e. $k=N_M$, and $a_1=a_2=\ldots =a_k = 0$.
Let us note that 
\[f(E_{k+1}^{a_0, a_1,\ldots, a_{k}}) \subseteq E_{k}^{a_1, a_2\ldots, a_{k}}\]
for every $k\geqslant 1$, and $f(E_1^{a_0}) = E_0$.

We need two results concerning the shape/behaviour of the sets above. The first lemma states that the orbit (till it hits $E_0$) of any such set may intersect the negative real line only once. Later in the proof this will limit the number of elements of a covering.

\begin{lem}\label{lem:E_split_only_once}
Let $k\geqslant 1$ and $a_0,a_1,\ldots,a_{k-1}$ be a finite sequence of integers.
There exists at most one $p\in\{0,1,\ldots,k\}$ such that $f^p(E_k^{a_0,a_1,\ldots,a_{k-1}})\cap \R_{-}\neq\varnothing$.
\end{lem}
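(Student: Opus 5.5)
The plan is to pin down $p$ from the escape time alone, using that a point of $\R_-$ has a completely explicit real orbit. The point is that membership in $\R_-$ is a pointwise condition. If $f^p(E_k^{a_0,\ldots,a_{k-1}})$ contains some $w\in\R_-$, then $w$ lies in a set with a known escape time. Computing $\tau(w)$ directly then forces the value of $p$, so no connectivity or shape information about the sets $E_k^{a_0,\ldots,a_{k-1}}$ is needed.

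First, for $0\leqslant p\leqslant k$, I check from the definition that
\[
f^p\bigl(E_k^{a_0,\ldots,a_{k-1}}\bigr)\subseteq E_{k-p}^{a_p,\ldots,a_{k-1}}\subseteq E_{k-p}
\quad\text{for } p<k,
\qquad
f^k\bigl(E_k^{a_0,\ldots,a_{k-1}}\bigr)\subseteq E_0 .
\]
This follows by iterating $f(E_{j+1}^{b_0,\ldots,b_j})\subseteq E_j^{b_1,\ldots,b_j}$ and $f(E_1^{a_0})=E_0$. Since $E_0\subseteq H_M^+$ with $M>0$, the case $p=k$ never meets $\R_-$. So if $f^p(E_k^{a_0,\ldots,a_{k-1}})\ni w$ with $w\in\R_-$, then $p<k$ and $\tau(w)=k-p$.

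Second, I compute $\tau(w)$ for an arbitrary $w<0$. Here $f(w)=e^w\in(0,1)=(0,f(0))$. The map $f$ is strictly increasing on $\R$ and all iterates stay real, so for every $j\geqslant 0$,
\[
f^j(0)<f^{j+1}(w)<f^{j+1}(0).
\]
By the choice $M=f^{N_M}(0)$ this gives $f^{N_M}(w)<f^{N_M}(0)=M$ and $f^{N_M+1}(w)>f^{N_M}(0)=M$. The orbit of $0$ is increasing, so all earlier iterates $f^j(w)$ with $j\leqslant N_M$ are also $<M$; for $j=0$ this is just $w<0<M$. Hence every $w\in\R_-$ lies in $\tilde H_M^-$ and satisfies $\tau(w)=N_M+1$.

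Combining the two steps, any $p$ with $f^p(E_k^{a_0,\ldots,a_{k-1}})\cap\R_-\neq\varnothing$ must satisfy $k-p=N_M+1$, i.e. $p=k-N_M-1$. So there is at most one such $p$, and none at all when $k\leqslant N_M$.

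As a consistency check, the same real orbit shows that all itinerary entries of $w$ up to the escape time are $0$, so necessarily $a_p=\cdots=a_{k-1}=0$. This is not needed for the uniqueness claim, though.

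The only delicate point I expect is bookkeeping around $0$ and the boundary value $M$. Since $0\in\partial H_M^+$ is excluded from $E_0$'s pullbacks by the special rule, and $\R_-$ is open at $0$, the strict inequalities above hold. The escape time comes out as exactly $N_M+1$, not $N_M$; this relies on $H_M^+$ being closed while $f^{N_M}(w)<M$ strictly.
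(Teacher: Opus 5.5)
Your argument is correct and uses the same idea as the paper's proof: the escape time equals $k-p$ everywhere on $f^p(E_k^{a_0,\ldots,a_{k-1}})$, and on the real line the escape time is fixed by comparison with the orbit of $0$ through $M=f^{N_M}(0)$. The paper argues by contradiction: for $p_1<p_2$, the set $f^{p_2}(E_k^{a_0,\ldots,a_{k-1}})$ would contain both a negative real point and the positive real image of one, and these have different escape times. You instead compute $\tau\equiv N_M+1$ on $\R_-$ directly, which is slightly cleaner and even gives $p=k-N_M-1$. Your side remark that $0\in\partial H_M^+$ is a slip (it is $f^{N_M}(0)=M$ that lies on $\partial H_M^+$), but it plays no role in the proof.
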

\begin{proof}
Let $A=E_k^{a_0,a_1,\ldots,a_{k-1}}$. First, let us notice that $f^k(A)\cap \R_{-}=\varnothing$ because $f^k(A)\subseteq E_0$. Now, assume the opposite that there exist $p_1$ and $p_2$, $0\leqslant p_1 < p_2 \leqslant k-1$, such that 
\[f^{p_1}(A)\cap \R_{-}\neq\varnothing \text{ and } f^{p_2}(A)\cap \R_{-}\neq\varnothing.\]
We have $f^{p_2-p_1}(f^{p_1}(A))= f^{p_2}(A)$ therefore $f^{p_2}(A)$ intersect both $\R_{-}$ and $\R_{+}$.
Since $M=f^{N_M}(0)$ then $\tau(z_{+}) < \tau(z_{-})$ for $z_{+}\in\R_{+}$ and $z_{-}\in\R_{-}$.
But for all $z\in f^{p_2}(A)$ we have $\tau(z)=k-p_2$, which gives a contradiction.
\end{proof}
The second lemma says that such partition sets cannot intersect both positive and negative real half-line at the same time and that the intersection has to behave \emph{well-enough}, see Figure \ref{fig:inter} for an illustration.

\begin{lem}\label{lem:inter}
Let us suppose that there exists $k \geqslant 1$ and $a_0,a_1,\ldots,a_{k-1}$ such that 
\[E_k^{a_0,a_1,\ldots,a_{k-1}}\cap \R_{-}\neq\varnothing.\] 
Then $E_k^{a_0,a_1,\ldots,a_{k-1}}\cap \R_{\geqslant 0}=\varnothing$ and $E_k^{a_0,a_1,\ldots,a_{k-1}}\cap \R_{-}$ is a line segment. Moreover, all points of that line segment, except the endpoints, lie in the interior of $E_k^{a_0,a_1,\ldots,a_{k-1}}$.
\end{lem}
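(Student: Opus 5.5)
Write $A=E_k^{a_0,\ldots,a_{k-1}}$ and use the real dynamics of $f(x)=e^x$. The key observation is that $f$ maps $\R$ into $\R_{+}$, is increasing there, and the escape time $\tau$ restricted to the real line is nonincreasing in $x$. Indeed, for real $x<y<M$ the orbits stay ordered, $f^j(x)<f^j(y)$, so $y$ reaches $H_M^+$ no later than $x$.

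\textbf{Disjointness from $\R_{\geqslant 0}$.} Suppose $x_-\in A\cap\R_-$. Then $\tau(x_-)=k$, and by the argument of Lemma~\ref{lem:E_split_only_once} (using $M=f^{N_M}(0)$) we have $\tau(z_+)<\tau(z_-)$ for $z_+\in\R_+$, $z_-\in\R_-$. For $0$ itself, $\tau(0)=N_M$. For a negative point $x$ we get $f(x)\in(0,1)$, so $f^{N_M+1}(x)<f^{N_M}(1)$. Also $f(x)>0$ gives $f^{N_M+1}(x)>f^{N_M}(0)=M$, hence $\tau(x)=N_M+1$ for every $x<0$. This exact value of $\tau$ on $\R_-$ must be checked carefully (for $x\leqslant 0$ we have $f^{N_M}(f(x))\geqslant f^{N_M}(0)=M$, with equality only at $x=-\infty$).

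So every negative real point has $\tau=N_M+1$, while every point of $[0,M)$ has $\tau\leqslant N_M<N_M+1$. Since $A$ consists of points with $\tau=k=N_M+1$, we get $A\cap\R_{\geqslant 0}=\varnothing$ (points with $\Re\geqslant M$ are not in $\tilde H^-_M$ anyway).

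\textbf{$A\cap\R_-$ is a segment.} Every $x<0$ lies in $\tilde H_M^-\cap P_0$, and $f^j(x)\in(0,M)\subset P_0$ for $1\leqslant j\leqslant N_M$, so its itinerary prefix is $0^{N_M+1}$. Hence $A\cap\R_-$ is either empty or all of $\R_-$, the latter exactly when $a_0=\cdots=a_{k-1}=0$ and $k=N_M+1$. Here I must fix the convention for $\R_-$. If it is the open negative half-line, then ``line segment'' means $(-\infty,0)$, a degenerate segment whose endpoints are $0$ and $-\infty$, and the interior claim concerns the points $x<0$. I expect the intended reading is the half-line, with the only endpoint being $0$. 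This is consistent with the figure and with $0$ excluded from $E^{0\ldots0}_{N_M+1}$ when $0$ is removed in the inductive scheme.

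\textbf{Interior points.} Fix $x<0$. The strict inequalities $f^j(x)<M$ for $0\leqslant j\leqslant N_M$ and $f^{N_M+1}(x)>M$ are open conditions, by continuity of $f^j$ and a strict inequality at $j=N_M+1$. The itinerary conditions require $f^j(z)\in P_0$, and $f^j(x)$ is real, hence in the interior of $P_0$ because $|\Im|<\pi$ is open. So a small disc around $x$ lies in $A$. The main point to verify is $f^{N_M+1}(x)>M$ strictly. This holds because $f(x)>0$ implies $f^{N_M}(f(x))>f^{N_M}(0)=M$, by strict monotonicity of $e^t$.

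The only delicate step is the bookkeeping around whether the boundary point $0$ belongs to $A$. We have $f^{N_M}(0)=M\in H_M^+$, so $\tau(0)=N_M$ and $0\notin A$, which makes $0$ an endpoint that is not an interior point, as the statement allows.
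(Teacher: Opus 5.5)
Your proof is correct, but it reaches the statement by a more explicit route than the paper. The paper shares your first step: points of $\R_-$ and of $\R_{\geqslant 0}$ have different escape times because $M=f^{N_M}(0)$.

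For the segment claim the paper only proves order-convexity. If $z_1<z_2<z_3$ with $z_1,z_3\in A\cap\R_-$, monotonicity of real orbits places $f^k(z_2)$ between $f^k(z_1)$ and $f^k(z_3)$, hence in $E_0$. For the interior claim it notes that a non-endpoint $z$ satisfies $M<f^k(z)<e^M$, so $f^k(z)\in\intt E_0$, and concludes $B(z,\varepsilon)\subseteq A$.

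You instead compute $\tau\equiv N_M+1$ on all of $\R_-$ and observe that every negative real has itinerary prefix $0^{N_M+1}$. So the only set $E_k^{a_0,\ldots,a_{k-1}}$ meeting $\R_-$ is $E_{N_M+1}^{0,\ldots,0}$, and it contains the whole half-line $(-\infty,0)$ in its interior.

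What your route buys:
\begin{itemize}
\item A sharper statement. The ``except the endpoints'' clause becomes vacuous on $\R_-$, since the only finite endpoint, $0$, is not in $A$.
\item A more complete openness step. Your argument explicitly covers the conditions $f^j(z)\in P_{a_j}\cap\tilde H_M^-$ for $j<k$ (real points lie in the open strip $|\Im|<\pi$, and the inequalities are strict). The paper's step ``$f^k(z)\in\intt E_0$, therefore $B(z,\varepsilon)\subseteq A$'' leaves these implicit.
\end{itemize}
What the paper's route buys: its convexity argument never uses the exact value of $\tau$. The segment part would therefore still go through if $M$ were not on the orbit of $0$, although disjointness from $\R_{\geqslant 0}$ would not.

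One small bookkeeping point. In your disjointness paragraph you only check $f^{N_M+1}(x)>M$, which gives $\tau(x)\leqslant N_M+1$. The reverse inequality needs $f^j(x)<f^j(0)\leqslant M$ for $0\leqslant j\leqslant N_M$. You assert this later, when you write $f^j(x)\in(0,M)$, but it should be derived at the point where $\tau(x)=N_M+1$ is first claimed.
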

\begin{proof}
Let $A=E_k^{a_0,a_1,\ldots,a_{k-1}}$. We have $A\cap \R_{-}\neq\varnothing$.
As in the proof of Lemma \ref{lem:E_split_only_once} points in $\R_{-}$ and points in $\R_{\geqslant 0}$ have different escape times $\tau$, therefore $A\cap \R_{\geqslant 0}=\varnothing$.

To prove that the set $A\cap \R_{-}$ is a line segment it is enough to prove that if $z_1<z_2<z_3$ and $z_1, z_3 \in A\cap \R_{-}$ then $z_2\in A\cap \R_{-}$.
We have 
\[\Re(f^n(z_1)) < \Re(f^n(z_2)) < \Re(f^n(z_3)),\]
for every $n\geqslant 0$. In particular,
\[\Re(f^k(z_1)) < \Re(f^k(z_2)) < \Re(f^k(z_3))\]
thus $f^k(z_2)\in E_0$ and therefore $z\in A$.

Now, suppose that $z$ is an element of obtained line segment which is not an endpoint of that line segment. Then $M<\Re(f^k(z))<e^M$, which means that $f^k(z)\in \intt E_0$. Therefore there exists $\varepsilon>0$ such that $B(z,\varepsilon)\subseteq A$.
\end{proof}

\begin{figure}[h!]\centering
	\begin{tikzpicture}[font=\small]

\begin{scope}
    \draw [black,fill=black!10] plot [smooth cycle, tension=0.9] coordinates {
        (-1,1) (0.5,0.35)(1.6,0.05)(1.1,-0.55)(-1.1,-0.55)
    };
    \draw[-Stealth] (-3,0) -- (3,0);
    \node at (0,-0.35) {$0$};
    \node at (2.5,0.25) {$\R$};
    \draw[-] (0,-0.15) -- (0,0.15);

\end{scope}
 
\begin{scope}[xshift=8cm]
    \draw [black,fill=black!10] plot [smooth cycle, tension=0.9] coordinates { (-2,0.05)(-1.25,-0.5)(-0.65,0.55) (-0.2,-0.2)(0.55,0.25) (1.4,-0.6) (1,0.65) (-0.5,1)
    };
    \draw[-Stealth] (-3,0) -- (3,0);
    \node at (2.5,0.25) {$\R$};
    
\end{scope}
 
\end{tikzpicture}
		\caption{\footnotesize Impossible shapes of $E_k^{a_0, a_1,\ldots, a_{k-1}}$ by Lemma \ref{lem:inter}.}
	\label{fig:inter}
\end{figure}

From the above lemmas it follows that if $C$ is a connected component of $E_k^{a_0,a_1,\ldots,a_{k-1}}$ and for some $p\in\{0,1\ldots, k-1\}$ we have $f^p(C)\cap\R_{-}\neq\varnothing$ then $C$ can be divided into two disjoint parts: the first is mapped with $f^p$ into $f^p(C)\cap \{z:\Im(z) \geqslant 0\}$ and the second is mapped with $f^p$ into $f^p(C)\cap \{z:\Im(z) < 0\}$. We use that to further divide such $E_k^{a_0,a_1,\ldots,a_{k-1}}$ into two parts as presented in the following definition.

\begin{dfn}
Let $k\geqslant 1$ and let $a_0, a_1,\ldots, a_{k-1}$ be the finite sequence of integers. Suppose that there exists $p\in\{0,1\ldots, k-1\}$ such that $f^p(E_k^{a_0,a_1,\ldots,a_{k-1}})\cap\R_{-}\neq\varnothing$.
We define $E_k^{a_0,a_1,\ldots,a_{k-1}}(+)$ to be the union of all connected components of $C$ of $E_k^{a_0,a_1,\ldots,a_{k-1}}$ such that $f^p(C)$ is a subset of upper half-plane $\{z:\Im(z) \geqslant 0\}$. We define $E_k^{a_0,a_1,\ldots,a_{k-1}}(-)$ to be the union of all connected components of $C$ of $E_k^{a_0,a_1,\ldots,a_{k-1}}$ such that $f^p(C)$ is a subset of lower half-plane $\{z:\Im(z) < 0\}$. 
If there is no $p\in\{0,1\ldots, k-1\}$ such that $E_k^{a_0,a_1,\ldots,a_{k-1}}\cap\R_{-}\neq\varnothing$, then we define 
\[E_k^{a_0,a_1,\ldots,a_{k-1}}(+)= E_k^{a_0,a_1,\ldots,a_{k-1}}\]
and 
\[E_k^{a_0,a_1,\ldots,a_{k-1}}(-) = \varnothing.\]
\end{dfn}
With the above definition we have 
\[E_k^{a_0,a_1,\ldots,a_{k-1}}(+)\cup E_k^{a_0,a_1,\ldots,a_{k-1}}(-)= E_k^{a_0,a_1,\ldots,a_{k-1}}\]
and
\[E_k^{a_0,a_1,\ldots,a_{k-1}}(+)\cap E_k^{a_0,a_1,\ldots,a_{k-1}}(-)= \varnothing\]
for every $k\geqslant 1$ and every sequence $a_0,a_1,\ldots,a_{k-1}$.

\subsection{Technical lemmas}
Before we construct the coverings we need a few results concerning the behaviour of the orbit of the map. 

We start with a lemma that says that the trajectory of any point starting in the far-left half plane has to stick \emph{close} to the orbit of 0 for least till it reaches far-right half-plane.

\begin{lem}\label{lem:from_left_to_right}
There exists $M>0$ (it is sufficient to take $M=f^4(0)$) with the following property. For every $x>M$, define
\[
n_x:=\min\left\{n\geqslant 1:f^n(0)\geqslant \frac{x}{3}\right\}.
\]
Then
\[
f^{n_x+1}(\tilde{H}_{-x}^{-})\subseteq B\left(f^{n_x}(0),\frac{\pi}{6}\right)\subseteq H_{\frac{x}{4}}^{+}.
\]
In other words, for any $z$ with $\Re(z)=-x<-M$ the trajectory $f^n(z)$ \emph{detaches} from the orbit of 0 after passing $\frac{x}{4}$.
\end{lem}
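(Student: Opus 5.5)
We compare the orbit of $z$ with the orbit of $0$ and track the error, which is tiny at first and grows roughly like the derivative along the orbit of $0$. Write $z_n=f^n(z)$ and $p_n=f^n(0)$; the $p_n$ are real and increase superexponentially, with $p_0=0$, $p_1=1$, $p_2=e$, $p_3=e^e$, and so on.

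\emph{Step 1: the first iterate.} If $\Re z<-x$, then $z_1=e^{z}$ satisfies $|z_1-p_0|=|z_1|<e^{-x}$. Set $\varepsilon_n=|z_n-p_n|$. Then $\varepsilon_1<e^{-x}$.

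\emph{Step 2: inductive error propagation.} We have
\[
z_{n+1}-p_{n+1}=e^{p_n}\bigl(e^{z_n-p_n}-1\bigr),
\]
so as long as $\varepsilon_n\leqslant 1$,
\[
\varepsilon_{n+1}\leqslant e^{p_n}\bigl(e^{\varepsilon_n}-1\bigr)\leqslant e^{p_n}\,\varepsilon_n\,e^{\varepsilon_n}.
\]
Iterating while the errors remain small gives
\[
\varepsilon_{n+1}\leqslant C\,e^{p_1+\dots+p_n}\,e^{-x},
\]
with $C$ close to $1$, namely $C\leqslant\prod_j e^{\varepsilon_j}$. Since $p_{j+1}=e^{p_j}$ grows superexponentially, the exponent is dominated by its last term: $p_1+\dots+p_n\leqslant p_n+2p_{n-1}$ for $n\geqslant 2$ or so. We apply this with $n=n_x$.

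\emph{Step 3: using the definition of $n_x$.} By minimality, $p_{n_x-1}<x/3$. Then
\[
p_1+\dots+p_{n_x}\leqslant p_{n_x}+2p_{n_x-1}<p_{n_x}+\tfrac{2x}{3}.
\]
Here is the subtlety: $\varepsilon_{n_x+1}$ compares $z_{n_x+1}$ with $p_{n_x+1}$, not with $p_{n_x}$. As written, the statement asks for $f^{n_x+1}(\tilde H^-_{-x})\subseteq B(f^{n_x}(0),\pi/6)$, which reads as an index shift: $z_{n_x+1}$ is close to $p_{n_x}$ because $z_1\approx p_0=0$, so the natural comparison is $z_{m+1}$ with $p_m$ under the shifted labelling.

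Redoing the estimate with this shift, put $w=z_1$ with $|w|<e^{-x}$. Then $f^{m}(w)-p_m$ has modulus at most
\[
e^{p_0+p_1+\dots+p_{m-1}}\,e^{-x}\cdot(1+o(1)).
\]
For $m=n_x$ the exponent is at most $p_{n_x-1}+2p_{n_x-2}<x/3+2\log(x/3)$. This is much less than $x$ once $x>M$ is large. So the error is at most about $e^{-2x/3+O(\log x)}<\pi/6$, and all intermediate $\varepsilon_j$ stay at most $1$, which justifies the induction. The choice $M=f^4(0)$ only needs to make these crude inequalities valid; we check them numerically for $x>f^4(0)$.

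\emph{Step 4: the second inclusion.} For the ball we need $p_{n_x}-\pi/6\geqslant x/4$, which is immediate from $p_{n_x}\geqslant x/3$ and $x>M>100$.

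The main obstacle is the bookkeeping in Step 2. We must bound the sum of the $p_j$ by a constant times the last term, using the superexponential growth of the orbit (for instance $p_j\leqslant\log p_{j+1}$). We also must make sure the induction hypothesis $\varepsilon_j\leqslant 1$ is maintained. I would handle both by proving by induction the explicit bound
\[
\varepsilon_{j+1}\leqslant 2\,e^{p_0+\dots+p_{j}-x},
\]
with the indices shifted as in Step 3, and then checking that this is at most $\pi/6$ up to $j=n_x$.
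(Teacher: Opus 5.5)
Your argument is correct and is essentially the paper's proof. You compare $f^{m+1}(z)$ with $f^{m}(0)$ and propagate the error via $|e^{h}-1|\leqslant |h|e^{|h|}$, so that it is at most a constant times $e^{-x}f(0)f^{2}(0)\cdots f^{m}(0)$. You then make this small at $m=n_x$ using $f^{n_x-1}(0)<x/3$, exactly as the paper does with its radii $\rho_k(x)$. The differences are only in bookkeeping. The paper uses the uniform per-step factor $e^{\pi/6}$ and the cruder bound $f(0)\cdots f^{k}(0)\leqslant (f^{k}(0))^2$, which is why it also needs $n_x<x/3$. Your constant $\prod_j e^{\varepsilon_j}\leqslant 2$ together with $p_0+\dots+p_{m-1}\leqslant p_{m-1}+2p_{m-2}$ gives a sharper bound and avoids that step. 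Note, however, that your Steps 1--2 are mis-indexed: with $\varepsilon_n=|z_n-p_n|$ one has $\varepsilon_1=|z_1-1|\approx 1$, not $\varepsilon_1<e^{-x}$. So only the shifted version from Step 3 onward should be kept.
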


\begin{proof}
For $w=f^k(0)+h$ with $|h|\leqslant \frac{\pi}{6}$ we have
$f(w)=f^{k+1}(0)\cdot e^{h}$
and, since $|e^{h}-1|\leqslant |h|\cdot e^{|h|}\leqslant |h|\cdot e^{\frac{\pi}{6}}$,
\begin{equation}\label{eq:rho_est}
|f(w)-f^{k+1}(0)|=f^{k+1}(0)\cdot|e^{h}-1|\leqslant e^{\frac{\pi}{6}}\cdot |h|\cdot f^{k+1}(0).
\end{equation}
Suppose $z\in \tilde{H}_{-x}^{-}$ with $x>0$. Then $f(\tilde{H}_{-x}^{-})\subseteq B(0,e^{-x})$.
Set $\rho_{0}(x)=e^{-x}$ and, recursively,
\[
\rho_{k}(x)=e^{\frac{\pi}{6}}\cdot f^{k}(0)\cdot \rho_{k-1}(x)=e^{k\frac{\pi}{6}-x}\cdot f(0)\cdot f^2(0)\cdot\ldots\cdot f^{k}(0).
\]
Using \eqref{eq:rho_est}, if $\rho_{k}(x)\leqslant \frac{\pi}{6}$, then
\[
f^{k+1}(\tilde{H}_{-x}^{-})\subseteq B\left(f^k(0),\rho_{k}(x)\right) \subseteq B\left(f^k(0),\frac{\pi}{6}\right).
\]
With $x$ large enough we have $f^{n_x}(0) \geqslant \frac{x}{4}+\frac{\pi}{6}$.
Therefore it is enough to prove that $\rho_{n_x}(x)\leqslant \frac{\pi}{6}$.
We have
\[
f(0)\cdot f^2(0)\cdot\ldots\cdot f^k(0)\leqslant (f^{k}(0))^2,
\]
for all $k\geqslant 1$, which can be easily proved by induction. Consequently,
\begin{equation}\label{eq:rho_bound}
\rho_{k}(x)\leqslant e^{k\frac{\pi}{6}-x}\cdot (f^{k}(0))^2
\end{equation}
for all $k\geqslant 1$.
By the definition of $n_x$, we have $f^{n_x}(0) < e^{\frac{x}{3}}$.
Combining this with \eqref{eq:rho_bound} we obtain
\[
\rho_{n_x}(x)< e^{n_x\frac{\pi}{6} -\frac{x}{3}}.
\]
Trivially, for $n\geqslant 4$ we have $e^n < f^{n}(0)$, therefore, with $M$ large enough we have $e^{n_x} < f^{n_x}(0)$. Using $f^{n_x}(0) < e^{\frac{x}{3}}$ again, we obtain $n_x < \frac{x}{3}$. Therefore
\[
\rho_{n_x}(x)< e^{\frac{x}{3}\left(\frac{\pi}{6} -1\right)}.
\]
Since $\frac{\pi}{6} < 1$, then with sufficiently large $x$ (in fact $x>5$ is enough) we have $\rho_{n_x}(x)\leqslant\frac{\pi}{6}$.
\end{proof}

The next lemma is about the distortion of the iterates of $f$.
\begin{lem}\label{lem:distortion_v0}
Let $n\geqslant 1$ and let $A$ be such that for every $m\in\{0,1,\ldots,n\}$ we have $f^m(A)\subseteq B\left(f^m(0),\frac{\pi}{6}\right)$. Let $g=f^n$ on $A$. Then $g$ has a bounded distortion on $A$ and 
\[\frac{\sup_{z\in A}|g'(z)|}{\inf_{z\in A}|g'(z)|}\leqslant \frac{\left(1+\frac{\pi}{6r}\right)^4}{\left(1-\frac{\pi}{6r}\right)^4}\]
for $r=f^n(0)-f^{n-1}(0)$.
\end{lem}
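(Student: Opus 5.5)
The plan is to obtain the bound from the Koebe Distortion Theorem (Theorem \ref{thm:Koebe}) applied to the inverse branch of $f^n$ that follows the orbit of $0$. We do not estimate the product $|g'(z)|=\prod_{m=1}^{n}|f^m(z)|$ factor by factor, because that would give a product of $n$ ratios rather than a single expression in $r$. Set $a=f^n(0)$ and $r=f^n(0)-f^{n-1}(0)$. We may assume $r>\frac{\pi}{6}$, since otherwise the right-hand side is not a meaningful bound. This holds in all intended applications, because $f^n(0)-f^{n-1}(0)\geqslant f(0)-0=1$ and the differences grow quickly.

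\emph{Step 1: a univalent inverse branch on $B(a,r)$.} Let $\mathrm{Log}$ denote the principal branch $f_0^{-1}$. Define $D_n=B(a,r)$ and $D_{m-1}=\mathrm{Log}(D_m)$ for $m=n,\ldots,1$. The key claim is that $D_m\subseteq\{\Re z>0\}$ for $m\geqslant 1$, so that $\mathrm{Log}$ is defined and injective on each $D_m$. For $m=n$ this holds because every $w\in D_n$ satisfies $\Re w>a-r=f^{n-1}(0)\geqslant 0$. Moreover $|w|>f^{n-1}(0)$, so $\Re\,\mathrm{Log}(w)>\log f^{n-1}(0)=f^{n-2}(0)$ (using $f^{j}(0)=e^{f^{j-1}(0)}$). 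Inductively, every point of $D_m$ has modulus larger than $f^{m-1}(0)$, and $D_{m-1}$ lies in $\{\Re z>f^{m-2}(0)\}\subseteq\{\Re z>0\}$ for $m\geqslant 2$. At the last step we only need $\mathrm{Log}$ to be defined on $D_1\subseteq\{\Re z>0\}$. Hence $G=\mathrm{Log}^{\circ n}\colon B(a,r)\to\C$ is holomorphic and injective, with $G(a)=0$ and $f^n\circ G=\mathrm{id}$.

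\emph{Step 2: $A$ lies in the image of this branch.} This is the step I expect to need the most care. The hypothesis gives $f^m(A)\subseteq B(f^m(0),\frac{\pi}{6})$ for every $m$, and $f^m(0)$ is real, so every $f^m(z)$ with $z\in A$ has imaginary part in $(-\frac{\pi}{6},\frac{\pi}{6})$. Therefore $f^{m}(z)=\mathrm{Log}(f^{m+1}(z))$ for every $m$, that is, $z=G(f^n(z))$. This uses $f^{n}(A)\subseteq B(a,\frac{\pi}{6})\subseteq B(a,r)$ and, at each intermediate level, that the principal logarithm is the correct inverse. Consequently $g=f^n|_A$ is inverted by $G$ on $g(A)$, and $g'(z)=1/G'(g(z))$.

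\emph{Step 3: apply Koebe.} For $w\in g(A)\subseteq B(a,\frac{\pi}{6})$ we have $\theta=|w-a|/r<\frac{\pi}{6r}$. Theorem \ref{thm:Koebe} gives
\[
\frac{1-\theta}{(1+\theta)^3}|G'(a)|\leqslant|G'(w)|\leqslant\frac{1+\theta}{(1-\theta)^3}|G'(a)|.
\]
Both bounds are monotone in $\theta$, so for any $z_1,z_2\in A$ we can pass to $\theta_0=\frac{\pi}{6r}$ and obtain
\[
\frac{|g'(z_1)|}{|g'(z_2)|}=\frac{|G'(g(z_2))|}{|G'(g(z_1))|}\leqslant\frac{(1+\theta_0)/(1-\theta_0)^3}{(1-\theta_0)/(1+\theta_0)^3}=\frac{(1+\theta_0)^4}{(1-\theta_0)^4}.
\]
Taking the supremum over $z_1$ and the infimum over $z_2$ gives the claimed bound. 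The only delicate point is Step 1: one must check that the pulled-back disks never meet the slit $(-\infty,0]$. This is why the radius $r=f^n(0)-f^{n-1}(0)$ is the natural choice, since with it the moduli at level $m$ stay above $f^{m-1}(0)$.
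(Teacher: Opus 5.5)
Your proposal is correct and follows the paper's proof: invert $f^n$ univalently on $B(f^n(0),r)$ and apply the Koebe Distortion Theorem with $\theta=\frac{\pi}{6r}$ to compare $|g'|$ on $A$. You also justify two points the paper merely asserts, namely that the iterated principal logarithm is defined and injective on the whole disk $B(f^n(0),r)$, and that this branch inverts $g$ on $g(A)$; note that $r\geqslant 1>\frac{\pi}{6}$ always holds, so your side assumption is automatic.
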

\begin{proof}
Let $\varphi$ be the inverse of $g$. The function $\varphi$ can be defined on $B = B(f^{n}(0),r)$ and is univalent so by Theorem \ref{thm:Koebe} we have
\[\frac{1-\theta}{(1+\theta)^3} |\varphi'(f^{n}(0))|\leqslant |\varphi'(z)| \leqslant  \frac{1+\theta}{(1-\theta)^3}|\varphi'(f^{n}(0))|\]
for every $z\in g(A)$, where $\theta = \frac{\pi}{6r}$. Thus, we have
\[\frac{\sup_{z\in g(A)}|\varphi'(z)|}{\inf_{z\in g(A)}|\varphi'(z)|}\leqslant \frac{(1+\theta)^4}{(1-\theta)^4}.\]
With that we have also
\begin{equation}\label{eq:distortion theta}
\frac{\sup_{z\in A}|g'(z)|}{\inf_{z\in A}|g'(z)|}\leqslant \frac{(1+\theta)^4}{(1-\theta)^4}.
\end{equation}
\end{proof}

The following, rather technical, lemma studies the behaviour of short trajectories near~0.

\begin{lem}\label{lem:eta^4}
If $z$ satisfies $-3\leqslant\Re(z)< \ln(\eta)$, then
\[|(f^4)'(z)|\geqslant \eta^4.\]
\end{lem}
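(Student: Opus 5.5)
The plan is to use the chain rule together with $f'=f$. This gives
\[
(f^4)'(z)=f(z)\,f^2(z)\,f^3(z)\,f^4(z)=\prod_{j=0}^{3}f'(f^j(z)),
\]
and since $|e^w|=e^{\Re w}$,
\[
|(f^4)'(z)|=\exp\bigl(\Re z+\Re f(z)+\Re f^2(z)+\Re f^3(z)\bigr).
\]
The claim is therefore equivalent to
\[
S(z):=\Re z+\Re f(z)+\Re f^2(z)+\Re f^3(z)\geqslant 4\ln\eta=4\ln(1.1)\approx 0.381 .
\]
The only free data are $x_0=\Re z\in[-3,\ln\eta)$ and the argument $y_0=\Im z$, which enters only through $u:=f(z)=re^{i\theta}$ with $r=e^{x_0}\in[e^{-3},1.1)$ and $\theta$ arbitrary. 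So I would write $S$ as a function of $(r,\theta)$ on a compact set and bound it from below.

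First I would locate the later iterates. Since $|\Im u|\leqslant r<1.1<\pi/2$, the point $f^2(z)=e^{u}$ has argument in $(-1.1,1.1)$, so it lies in the right half-plane. More quantitatively:
\begin{itemize}
\item $\Re f^2(z)=e^{r\cos\theta}\cos(r\sin\theta)\geqslant e^{-r}\cos r$, which stays bounded below by roughly $0.15$;
\item $|\Im f^2(z)|\leqslant e^{r}\sin r$.
\end{itemize}
Writing $f^2(z)=a+ib$, I then get $\Re f^3(z)=e^{a}\cos b$. This needs a lower bound on $\cos b$, which I would obtain by checking numerically that $|b|$ stays well below $\pi/2$ on the relevant range of $r$. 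Alternatively, and more crudely, I would bound $\Re f^3(z)\geqslant e^{a}\cos b$ using the monotone bounds above.

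Next I would split into cases according to $r$.

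\emph{Case $r$ small}, say $r\leqslant 0.2$, which includes $\Re z$ near $-3$. Here $u\approx 0$, so $f^2(z)\approx 1$ and $f^3(z)\approx e$. Then
\[
S\geqslant \ln r - r + \bigl(e^{-r}\cos r\bigr)+\exp\bigl(e^{-r}\cos(e^{r}\sin r)\bigr)\cos(\cdots).
\]
At $r=e^{-3}$ this gives approximately $-3+0.05+0.95+2.6\approx 0.6$. The term $\ln r$ is increasing in $r$ while the other terms lose only $O(r)$, so I would check the bound at the endpoints of a few subintervals of $r$.

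\emph{Case $r$ moderate}, $0.2\leqslant r<1.1$. Here $\ln r\geqslant -1.61$ and $\Re u\geqslant -r$. The worst direction is plausibly $\theta=\pi$. At $r=1.1$ one gets approximately $0.095-1.1+0.333+e^{0.333}\approx 0.72$. For general $\theta$, I would bound each of the three later terms from below by its monotone worst case in $\theta$ and check the resulting one-variable function of $r$ on a fine grid of subintervals. On each subinterval I would use the monotonicity of each term separately, evaluating the increasing terms at the left endpoint and the decreasing terms at the right endpoint.

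The main obstacle is making this numerical minimisation rigorous and clean, because the dependence on $\theta$ is coupled through three nested exponentials. My first attempt would be the crude decoupled bounds
\[
\Re f(z)\geqslant -r,\qquad \Re f^2(z)\geqslant e^{-r}\cos r,\qquad \Re f^3(z)\geqslant e^{\,e^{-r}\cos r}\cos\bigl(e^{r}\sin r\bigr),
\]
valid while $e^{r}\sin r<\pi/2$, that is, for $r$ up to about $0.8$. For larger $r$, where this cosine bound degenerates, I would instead use $\Re f^3(z)\geqslant 0$ only where the remaining three terms already exceed $0.381$, and otherwise handle $\theta$ near $0$ and $\theta$ near $\pi$ separately.
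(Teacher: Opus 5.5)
Your reduction to $S(z)=\Re z+\Re f(z)+\Re f^2(z)+\Re f^3(z)\geqslant 4\ln\eta\approx 0.381$ is exactly the paper's. The gap is in how you propose to bound $S$ from below: it fails on part of the range.

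Your decoupled bounds replace each term by its worst case in $\theta$ separately, but these worst cases occur at different angles. $\Re f(z)$ and $e^{r\cos\theta}$ are smallest at $\theta=\pi$, while $|\Im f^2(z)|$ is largest at an angle strictly between $0$ and $\pi/2$. Your bound $|\Im f^2(z)|\leqslant e^{r}\sin r$ even combines $\theta=0$ with $\theta=\pi/2$. The loss is fatal before the cosine degenerates. At $r=0.6$ your lower bound gives
\[
\ln 0.6-0.6+e^{-0.6}\cos 0.6+e^{e^{-0.6}\cos 0.6}\cos\bigl(e^{0.6}\sin 0.6\bigr)\approx -0.511-0.6+0.453+0.811\approx 0.15<0.381,
\]
whereas the true minimum of $S$ on the circle $|f(z)|=0.6$ is about $1.17$, attained at $\theta=\pi$. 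Your decoupled bound drops below $4\ln\eta$ once $r$ is slightly above $0.54$. For $r$ between about $0.79$ and $1.1$, the fallback ``handle $\theta$ near $0$ and near $\pi$ separately'' is only a sketch. As written, the range $0.54\lesssim r<1.1$ is not covered; you would need a genuinely two-dimensional rigorous computation with estimates coupled in $\theta$, and none is supplied.

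The missing idea is harmonicity, which removes the two-dimensional problem entirely. In your coordinate $u=f(z)$,
\[
S=\ln|u|+\Re u+\Re e^{u}+\Re e^{e^{u}}
\]
is harmonic on the closed annulus $e^{-3}\leqslant|u|\leqslant\eta$. By the minimum principle, its minimum is attained on one of the two boundary circles. There only $\theta$ varies, and the exact coupled values can be checked. On $|u|=e^{-3}$ the minimum is about $0.49$, and on $|u|=1.1$ it is about $0.72$, both at $\theta=\pi$. Incidentally, your value $0.6$ at $r=e^{-3}$ used $\Re f(z)=+e^{-3}$; the worst case is $-e^{-3}$, giving about $0.49$, which still suffices.

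The paper does the same thing in the $z$-variable. It uses $2\pi i$-periodicity to restrict to the rectangle $[-3,\ln\eta]\times[0,2\pi]$ and notes that $S$ is harmonic there. It then checks numerically the sides $\Re z=-3$ and $\Re z=\ln\eta$ and the real segment (the top edge is the bottom edge shifted by $2\pi i$).
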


\begin{proof}
We need to prove that for every $z$ such that $-3\leqslant\Re(z)\leqslant \ln(\eta)$ we 
\[|f(z)|\cdot |f^2(z)|\cdot|f^3(z)|\cdot|f^4(z)|\geqslant \eta^4.\]
Using the property $|e^z|=e^{\Re(z)}$ we can prove instead the following inequality
\[\Re(z) +  \Re(f(z))+  \Re(f^2(z))+  \Re(f^3(z))\geqslant 4\ln(\eta).\]
The map $f(z)$ is $2\pi i$-periodic, therefore it is enough to prove that the above inequality is true for $z$ from the set $A=\{w\in\C: -3\leqslant\Re(w)\leqslant \ln(\eta),\; 0\leqslant\Im(w)\leqslant 2\pi\}$. The expression 
\[\Re(z) +  \Re(f(z))+  \Re(f^2(z))+  \Re(f^3(z))\]
is a harmonic function as a sum of real values of holomorphic maps, therefore it is enough to prove that inequality
\[g(z) = \Re(z) +  \Re(f(z))+  \Re(f^2(z))+  \Re(f^3(z))\geqslant 4\ln(\eta)\]
holds for the points on the boundary of $A$. By expressing $g(z)$ as a function $g(x,y)$ with $x=\Re(z)$ and $y=\Im(z)$ we define
\[h(x) = g(x,0) \text{ and } h_a(x) = g(a,x).\]
It can be checked by computation that the minimal value of the function $h(x)$ on the interval $[-3,\ln(\eta)]$ and the minimal values of the functions $h_a(x)$ for $a\in\{-3,\ln(\eta)\}$ on the interval $[0,2\pi]$ are greater than $4\ln(\eta)$.



\begin{figure}[ht]
\centerline{\includegraphics[scale=0.25]{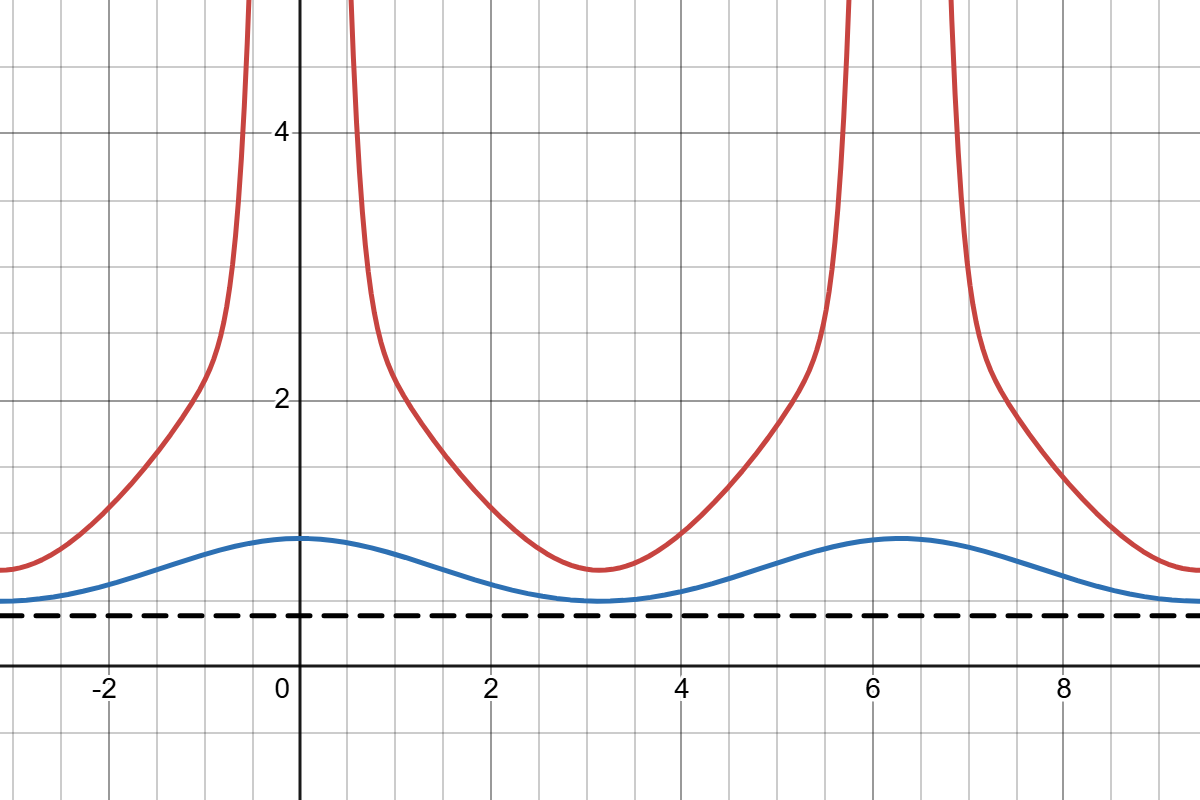}}
\caption{\footnotesize Graphs of the functions $h_{-3}$ (blue) and $h_{\ln(\eta)}$ (red) together with the horizontal line at the height $4\ln(\eta)$ (black).}
\label{fig1}
\end{figure}
\end{proof}

The next result says that whenever we leave left half-plane $\tilde{H}_M^{-}$ the derivative is large.
\begin{lem}\label{lem:eta^k}
For any $z$ with $-e^{M}\leqslant\Re(z)<M$ there exists $j\in\{0,1,2\}$ such that
\[|(f^{\tau(z)+j})'(z)|\geqslant \eta ^{\tau(z)+j} \text{ and } \Re(f^{\tau(z)+j}(z))\geqslant M.\]
\end{lem}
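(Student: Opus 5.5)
The plan is to turn the derivative bound into a statement about sums of real parts along the orbit. Since $f'=f$, the chain rule gives
\[
|(f^n)'(z)|=\prod_{i=1}^{n}|f^i(z)|=\exp\Big(\sum_{i=0}^{n-1}\Re f^i(z)\Big),
\]
so it suffices to find $j\in\{0,1,2\}$ with
\[
\sum_{i=0}^{\tau+j-1}\Re f^i(z)\geqslant (\tau+j)\ln\eta
\qquad\text{and}\qquad
\Re f^{\tau+j}(z)\geqslant M .
\]
I will call $\ln\eta-\Re f^i(z)$ the \emph{debt} of time $i$ when it is positive. I then partition the times $0,\dots,\tau-1$ into consecutive blocks and show that each block, except possibly the last, has average real part at least $\ln\eta$. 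Note that $-e^M\leqslant\Re f^i(z)<M$ for all $i<\tau$: for $i=0$ by hypothesis, and for $0<i<\tau$ because $|f^i(z)|<e^M$, since $f^{i-1}(z)\in\tilde H^-_M$.

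There are three kinds of blocks.
\begin{enumerate}[(a)]
\item A time $i$ with $\Re f^i(z)\geqslant\ln\eta$ forms a block of length one and trivially pays its share.
\item A time $i$ with $-3\leqslant\Re f^i(z)<\ln\eta$ starts a block of length $4$, which pays by Lemma~\ref{lem:eta^4}.
\item A time $i$ with $\Re f^i(z)=-x<-3$ starts a ``shadowing'' block. Here $f^{i+1}(z)\in B(0,e^{-x})$, and by the estimate \eqref{eq:rho_est} from the proof of Lemma~\ref{lem:from_left_to_right} the orbit then follows $0,f(0),f^2(0),\dots$ with errors $\rho_k(x)\leqslant\pi/6$. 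Along this block the real parts are at least $f^{k}(0)-\pi/6$. I run the block until the orbit is close to $f^{n_x}(0)$, where Lemma~\ref{lem:from_left_to_right} applies for $x>M$; for $3<x\leqslant M$ I redo that lemma's computation with the constant $3$ in place of $M$. The next iterate then has real part of order $\exp(f^{n_x}(0))\gtrsim e^{x/3}$, which dominates $x$ plus the block length times $\ln\eta$.
\end{enumerate}
All these blocks are finite because the orbit reaches $E_0$ at time $\tau$. The distinguished last block is the one containing $\tau-1$, which may be forced to stop early.

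The last block is where $j$ enters. If it is of type (a), take $j=0$. If it is of type (c), two things can happen. Either the shadowing is cut off because the tracked point $f^{k}(0)$ itself exceeds $M$; then we pass into $E_0$ very close to the real axis, and one or two further iterates land near $f^{k+1}(0)$ and $f^{k+2}(0)$, which are huge and positive. This both pays the remaining debt and keeps $\Re f^{\tau+j}(z)\geqslant M$. Or the block completed before $\tau$, and we are in the previous case. Here the fact that $M$ lies on the orbit of $0$ should be what guarantees that the shadowing block and the escape time $\tau$ are synchronized, with an offset of at most two steps.

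If the last block is of type (b) and is truncated by $\tau$, its few non-positive terms have total debt at most about $3+4\ln\eta$. This is repaid by the single term $\Re f^{\tau}(z)\geqslant M>100$, so $j=0$ works. The only subtlety is that a type (b) block cannot straddle $\tau$ with a bad imaginary part; in that case I would shorten the block and absorb the deficit into the term at time $\tau$.

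I expect the main obstacle to be the type (c) bookkeeping in the middle range $3<x\leqslant M$ and at the end of the orbit. There one must check quantitatively that the shadowing error stays below $\pi/6$, so that imaginary parts remain small, and that the overshoot term is at least $x+(\text{block length})\ln\eta$. I will first try to do this using the bounds $\rho_k(x)\leqslant e^{k\pi/6-x}(f^k(0))^2$ and $n_x<x/3$ from the proof of Lemma~\ref{lem:from_left_to_right}, treating small $x$ by a direct finite check as in Lemma~\ref{lem:eta^4}.
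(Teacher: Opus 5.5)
The gap is in your treatment of the last type (c) block. It is not a bookkeeping issue: the statement itself fails there.

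Your dichotomy has two branches. Either a tracked point $f^k(0)$ ``exceeds $M$'' and then ``we pass into $E_0$ very close to the real axis'', or the block was completed before $\tau$. It misses the case where the block tracks $0,1,\dots,f^{N_M-1}(0)=\ln M$ and its final payoff iterate is $f^{\tau}(z)$ itself. Since $\Re f^{\tau}(z)$ enters only $|(f^{\tau+1})'(z)|$, you are then forced to take $j\geqslant 1$ and need $\Re f^{\tau+1}(z)\geqslant M$. But all you control is a deviation of at most $\pi/6$ from $\ln M$ one step earlier. The next step magnifies this by a factor of about $M$, so $f^{\tau}(z)$ need not be close to the real axis at all. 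Stopping at $j=0$ does not help either: along a shadowing block the accumulated derivative is, up to a bounded factor, just the current deviation from the orbit of $0$.

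Concretely, let $N=N_M\geqslant 4$ (forced by $M>100$), fix a small $\epsilon>0$, and set $w_N=M+\epsilon+i(\frac{\pi}{2}+\epsilon)$. Pull back along the real orbit of $0$ with the principal logarithm: $w_{k-1}=\log w_k$ and $z=\log w_0$. Then:
\begin{itemize}
\item $w_k=f^k(0)+h_k$ with $|h_k|=O(1/M)$ for $k<N$, and $f^{k+1}(z)=w_k$;
\item $\Re z\approx\ln\frac{\pi}{2}-\sum_{k<N}f^k(0)$, which lies in $(-e^M,M)$;
\item $\tau(z)=N+1$.
\end{itemize}
Using $h_{k+1}=w_{k+1}(1-e^{-h_k})$ and telescoping,
\[
|(f^{\tau})'(z)|=\prod_{k=0}^{N}|w_k|=|h_N|\prod_{k=0}^{N-1}\frac{|h_k|}{|1-e^{-h_k}|}\approx\frac{\pi}{2}<1.6<\eta^{5}\leqslant\eta^{\tau}.
\]
Meanwhile $\Re f^{\tau+1}(z)=e^{M+\epsilon}\cos(\frac{\pi}{2}+\epsilon)<0$ and $|f^{\tau+2}(z)|=e^{\Re f^{\tau+1}(z)}<1$. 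So no $j\in\{0,1,2\}$ meets both requirements.

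Apart from this, your scheme is the paper's. Blocks (a), (b), (c) are its three sector types. Using the radii $\rho_k$ and sums of real parts in (c), instead of its stopping rule $\diam f^n(R)\geqslant\frac{\pi}{6}$ plus Koebe distortion, is a harmless change of technique.

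The paper's closing paragraph has the same hole. It bounds the overshoot $j$ of the last sector, but its half-plane estimate controls $\Re f^{t}(u)$, not $\Re f^{t+1}(u)=\Re f^{\tau+j}(z)$. For the $z$ above, its sector has $t=N+1$, hence $j=1$, and $\Re f^{\tau+1}(z)<0$.

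What one can reasonably hope to prove is a version with a loss: $|(f^{\tau+j})'(z)|\geqslant c_M\,\eta^{\tau+j}$, with $c_M$ depending only on $N_M$ (roughly $\eta^{-N_M}$). In the example, $j=0$ gives $\Re f^{\tau}(z)\geqslant M$ and a derivative of about $\frac{\pi}{2}$. Such a constant should be absorbable in the later covering sums, since $\eta^{2N_M}e^{-M\delta}\to 0$.

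A minor point: the truncated type (b) case you worry about is vacuous. From $\Re u<\ln\eta$ the next three iterates have modulus below $e^{e^{\eta}}<21<M$. Also, your ``repaid by $\Re f^{\tau}(z)$, so $j=0$'' there is off by one in the same way, since $\Re f^{\tau}(z)$ only enters $|(f^{\tau+1})'(z)|$.
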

\begin{proof}
Put $\tau=\tau(z)$. We will divide the trajectory $z$, $f(z)$, \ldots, $f^\tau(z)$ into sectors such that on every sector the cumulative derivative grows at least geometrically, i.e. within a sector $f^\alpha(z)$ \ldots $f^{\alpha+p-1}(z)$ we have
\[\prod_{i=\alpha}^{\alpha+p-1} |f'(f^i(z))| \geqslant \eta^{p}.\]
Collecting the derivative over all sectors give the required result.

There are three types of sectors of different lengths depending on the position of $f^i(z)$ in the plane. We construct the partition step by step, i.e. we start with $z$, the first element of the trajectory, and construct the sector of required length, then we take the next element $f^k(z)$ for some $k$ and construct the next sector. We do this till we exhaust the entire trajectory (till $f^\tau(z)$).

Denote by $u$ the starting element of the sector (for the first sector $u=z$). There are three possibilities.

First, $\ln(\eta)\leqslant\Re(u)$ and therefore $f(u)$ lands outside $B(0,\eta)$ so $|f'(u)|>\eta$. Such a sector consist of one element only.

Second, $-3\leqslant\Re(u)<\ln(\eta)$ and therefore $f(w)$ lands in the annulus $B(0,\eta)\setminus B(0,e^{-3})$. Then by Lemma \ref{lem:eta^4} we have \[|(f^4)'(u)|\geqslant \eta^4.\] Such a sector consist of exactly 4 elements.

Third, $\Re(u)< -3$. The length of the sector is $t+1$, a number depending on $\Re(u)$. Let \[R=\{w\in\C: |\Re(w)-\Re(u)|<\frac{1}{2},\; |\Im(w)-\Im(u)|<\pi\}\] 
and
\begin{align*}
t &  = 1+\max\left\{n\in\N: f^n(R)\subseteq B\left(f^{n-1}(0),\frac{\pi}{6}\right)\right\}\\
&=\min\left\{n\in\N: f^n(R)\cap \left(\C\setminus B\left(f^{n-1}(0),\frac{\pi}{6}\right)\right)\neq\varnothing\right\}.
\end{align*}
First, let us prove that $t\geqslant 4$. It is enough to show that if $H=\tilde{H}_{\frac{5}{2}}^{-}$ then $f(H)\subseteq B(0,\frac{\pi}{6})$ and $f^2(H)\subseteq B(1,\frac{\pi}{6})$ and $f^3(H)\subseteq B(e,\frac{\pi}{6})$.
We have $f(H)=B(0,r_0)$ for 
\[r_0=e^{-\frac{5}{2}} < 0.09 < \tfrac{\pi}{6}.\] 
So $f^2(H)\subseteq B(1,r_1)$, where $r_1 = r_0\cdot e^{r_0}$, because $e^{r_0}$ is the maximum value of $|f'|$ on $B(0,r_0)$. We have 
\[r_1 < 0.09\cdot e^{0.09} < 0.1 < \tfrac{\pi}{6}.\]
Using the same reasoning we get that $f^3(H)\subseteq B(e,r_2)$, where 
\[r_2 < 0.1\cdot e^{1+0.1} < 0.31<\tfrac{\pi}{6}.\]
Now, we estimate $|(f^t)'(z)|$. We have $\diam(f^t(R))\geqslant \frac{\pi}{6}$. Therefore, there exists $\xi\in R$ such that $|(f^t)'(\xi)| \geqslant \frac{\pi}{6\diam(R)} \geqslant \frac{2}{25}$. Let $r=f^2(0)-f(0)=e-1$. By Lemma \ref{lem:distortion_v0} we have
\[
\frac{\sup_{z\in f(R)}|(f^2)'(z)|}{\inf_{z\in f(R)}|(f^2)'(z)|}\leqslant\frac{\left(1+\frac{\pi}{6r}\right)^4}{\left(1-\frac{\pi}{6r}\right)^4} \leqslant \frac{25}{2}.
\]
Which gives
\begin{align}\label{eq:distortion_of_composition_v0}
\frac{\sup_{z\in R}|F'(z)|}{\inf_{z\in R}|F'(z)|} &  \leqslant \frac{\sup_{z\in  R}|f'(z)|\cdot\sup_{z\in f(R)}|(f^2)'(z)|\cdot\sup_{z\in f^3(R)}|f'(z)|}{\sup_{z\in  R}|f'(z)|\cdot\inf_{z\in f(R)}|(f^2)'(z)|\cdot\inf_{z\in f^3(R)}|f'(z)|}\nonumber \\
& \leqslant e\cdot\frac{25}{2}\cdot\frac{e^{r+\frac{\pi}{6}}}{e^{r-\frac{\pi}{6}}} = \frac{25}{2}e^{\frac{\pi}{3}+1} < 97.
\end{align}
In the penultimate inequality we used the fact that $f^3(R)\subseteq B\left(f^3(0),\frac{\pi}{6}\right)$.
So, for $t\geqslant 4$, we get
\[|(f^t)'(z)| \geqslant \tfrac{1}{D}|(f^t)'(\xi)|\geqslant \tfrac{2}{25\,\cdot\, 97} = \tfrac{2}{2425} .\]
Let $A = \frac{2}{2425}$.
Thus, we have
\[|(f^{t+1})'(z)| \geqslant A \inf_{w\in R}|f'(f^t(w))|.\]
We have $f^{t-1}(R)\subseteq B(f^{t-2}(0),\frac{\pi}{6})$ and thus $f^{t}(R)$ is a subset of $H_r^+$ for 
\[r = \sin\left(\tfrac{\pi}{6}\right)e^{f^{t-2}(0)-\frac{\pi}{6}} = \tfrac{\sqrt{3}}{2}e^{-\frac{\pi}{6}}f^{t-1}(0),\]
leading to 
\[|(f^{t+1})'(z)|\geqslant A\exp\left(\tfrac{\sqrt{3}}{2}e^{-\frac{\pi}{6}}f^{t-1}(0)\right) \geqslant A\left(f^t(0)\right)^{\frac{1}{2}}.\]
Now, we want to prove (by induction) that for every $t\geqslant 4$ we have

\[A\left(f^t(0)\right)^{\frac{1}{2}} \geqslant \eta^{t+1}\]
with $\eta=1.1$. Denote $c=\frac{1}{2}$. 
For $t=4$ we check that
\[A(e^{e^e})^{c} = \frac{2}{2425}\left(e^{e^e}\right)^{c} > \eta^5.\]
We have
\begin{align*}
A(f^{t+1}(0))^c & = A\exp({f^{t}(0)})^c = A\exp(c\cdot f^{t}(0)) \\ 
& = A\exp\left(c\cdot(f^{t}(0)^c)^{\frac{1}{c}}\right) = A\exp\left(c\cdot\left(\frac{Af^{t}(0)^c}{A}\right)^{\frac{1}{c}}\right).
\end{align*}
Therefore, for the induction step, it is enough to prove that
\[A\exp\left(c\cdot \left(\frac{x}{A}\right)^{\frac{1}{c}}\right) \geqslant \eta x\]
for $x > 1$, which can be easily checked to be true.

Thus we successfully divided the trajectory of $z$ into parts such that on every part the derivative is at least grows geometrically with constant $\eta$. 

The only remaining question is whether the described procedure aligns correctly with the end of our finite trajectory. Recall that, by definition, $f^\tau(z)\in E_0$. Thus the last sector may not be of the second type (as $M>100$). If the last sector is of the first type, then it is of length one and aligns perfectly with our trajectory. 

However, if the last defined sector is of the third type, it may happen that its end will occur after $\tau$. 
But in such a situation, by Lemma \ref{lem:from_left_to_right} and the definition of $t$ the trajectory leads far to the right. How far can it go? Our trajectory started $-M<\Re(z)<M$ and it cannot leave $-e^{M}<\Re(z)<M$ before hitting $E_0$. Thus, the \emph{worst case scenario} (regarding how far can the point escape to the right) is when $\Re(u)=-e^{M}$ ($u$ is the starting element of the sector). Then its trajectory reaches $\Re(f^{t+1}(u))>\frac{\sqrt{3}}{2}e^{e^M/4}$. On the other hand $\Re(f^{t}(u))$ cannot be bigger then $e^M$, because the cumulative derivative would be too big for the diameter of the image of the rectangle to be smaller than $\frac{\pi}{6}$ -- compare the definition of $t$.

But any point in $E_0$ going to the right (and following $\frac{\pi}{6}$ close the orbit of $0$) has to enter $\Re(f^{t+1}(u))>\frac{\sqrt{3}}{2}e^{e^M/4}$ in two steps.
 This shows that $j\leqslant 2$ and finishes the proof.

\end{proof}

\subsection{Construction of the first level covering}
From now on, we assume that $M$ is large enough for all the lemmas above to hold and that $\delta<1$. Further lower bounds on $M$ will be imposed later in the paper. We always assume that $\c$ is exponentially bounded. 

We are now ready to construct the family of coverings. We will start by defining zeroth and first level covering and afterwards define the next levels by induction. Note that the coverings will depend on $M$. We have
\[\La \subseteq \sum_{k\in \Z} R_k^{c_0}.\]
Thus, for a given itinerary $\c$, as the zeroth level covering we define
\[\mathcal{F}^0_\c = \{R_k^{c_0}:k\in \Z\}.\]
Construction of the first level (in fact, all levels) covering $\mathcal{F}^1_{\c}$ depends on the location in the plane. Thus, we will define three subfamilies $\mathcal{R}^1_{\c}$, $\mathcal{L}^1_{\c}$, $\mathcal{M}^1_{\c}$ forming $\mathcal{F}^1_\c$, i.e. $\mathcal{F}^1_{\c} = \mathcal{R}^1_{\c} \cup \mathcal{L}^1_{\c} \cup \mathcal{M}^1_{\c}$
and they are covering of sets $\La\cap H_M^{+}$, $\La\cap H_{-M}^{-}$ and $\La\cap H_M^{\text{mid}}$ respectively.

The first subfamily, $\mathcal{R}^1_{\c}$, is constructed with the following idea: take any rectangle $R_k^p$ such that $R_k^p\cap \tilde{H}_M^{+}\neq\varnothing$, its image is a large annulus intersecting some of the rectangles from $\mathcal{F}_{\sigma(\c)}^0$. Pulling back those rectangles with the function $f$ gives the covering. More precisely, the family is defined as 
\begin{equation}
\mathcal{R}^1_{\c} = \{R_k^{c_0}\cap f^{-1}(R_l^{c_1}): f(R_k^{c_0})\cap R_l^{c_1}\neq\varnothing,\; k\geqslant \lfloor M\rfloor,\; l\in\Z \}.
\end{equation}

\begin{figure}[ht]
    \centering
    \begin{tikzpicture}[font=\small]
        \draw [black,fill=black!10] (2.5,1) rectangle (3,1.75);
        \draw node at (2.8,0.6) {$R_k^{c_0}$};
        \draw node at (4.9,1) {$f(R_k^{c_0})$};
        
        \centerarc[solid](0,0)(-5:185:3.9)
        \centerarc[dashed](0,0)(-15:-5:3.9)
        \centerarc[dashed](0,0)(185:195:3.9)
        
        \centerarc[solid](0,0)(-5:185:6.1)
        \centerarc[dashed](0,0)(-10:-5:6.1)
        \centerarc[dashed](0,0)(185:190:6.1)

        \draw[dashed] (-7,0)--(7,0);
        \draw[solid] (0,0.1)--(0,-0.1);
        \draw node at (0,-0.35) {$0$};
        \draw[dashed] (1.8,6.25)--(1.8,-0.1);
        \draw[solid] (1.8,0.1)--(1.8,-0.1);
        \draw node at (1.8,-0.35) {$M$};

        \draw [red] (-0.5,3.25) -- (-0.5,4);
        \draw [red] (-1,3.25) -- (-1,4);
        \draw [red] (-1.5,3.25) -- (-1.5,4);
        \draw [red] (-2,3.25) -- (-2,4);
        
        \draw [red] (-2.5,3.25) -- (-2.5,4);
        \draw [red] (-3,3.25) -- (-3,4);
        \draw [red] (-3.5,3.25) -- (-3.5,4);
        \draw [red] (-4,3.25) -- (-4,4);
        \draw [red] (-4.5,3.25) -- (-4.5,4);
        \draw [red] (-5,3.25) -- (-5,4);
        \draw [red] (-5.5,3.25) -- (-5.5,4);
        
        \draw [red] (0,3.25) -- (0,4);
        
        \draw [red] (0.5,3.25) -- (0.5,4);
        \draw [red] (1,3.25) -- (1,4);
        \draw [red] (1.5,3.25) -- (1.5,4);
        \draw [red] (2,3.25) -- (2,4);
        
        \draw [red] (2.5,3.25) -- (2.5,4);
        \draw [red] (3,3.25) -- (3,4);
        
        \draw [red] (4.5,3.25) -- (4.5,4);
        \draw [red] (5,3.25) -- (5,4);
        \draw [red] (5.5,3.25) -- (5.5,4);

        \fill [red!10] (3.5,3.25) rectangle (4,4);
        \draw [red] (3.5,3.25) rectangle (4,4);
        
        \draw [red] (-5.5,3.25) -- (3.5,3.25);
        \draw [red] (4,3.25) -- (5.5,3.25);
        \draw [red] (-5.5,4) -- (3.5,4);
        \draw [red] (4,4) -- (5.5,4);

        \draw[red] plot [smooth cycle] coordinates {(2.7,1.5) (2.85,1.55) (2.8, 1.25) (2.7,1.3)};
        \draw[fill=red, opacity=0.1] plot [smooth cycle] coordinates {(2.7,1.5) (2.85,1.55) (2.8, 1.25) (2.7,1.3)};
        
        \draw[-Stealth] (3.75,3.5) to[bend left] (2.8,1.4);
        \draw node at (4.1,2.5) {$f_{c_0}^{-1}$};

        \draw [red] node at (5.35,4.3) {$R_l^{c_1}$};

    \end{tikzpicture}
    \caption{\footnotesize Construction of the covering $\mathcal{R}^1_{\c}$.}
    \label{fig:covering R}
\end{figure}

For the subfamily $\mathcal{L}_{\c}^1$, covering far-left part of $\La$,  let us consider rectangle $R_{-k}^{c_0}$ for $k\geqslant \lfloor M\rfloor + 1$. The image of that rectangle is an annulus centred at $0$ with radii $e^{-k}$ and $e^{-k+1}$. Let 
\[t_k=\min\left\{n\in\N: \diam(f^n(B(0,e^{-k+1})))\geqslant \tfrac{\pi}{6}\right\}.\] 
Since $J(f)=\C$, such $t_k$ is well defined. Let us observe that, as in the proof of the Lemma~\ref{lem:eta^k}, since $f^{t_k}(R_k^{c_0})$ is a subset of the ball $B\left(f^{t_k-1}(0),\frac{\pi}{6}\right)$, then $f^{t_k+1}(R_k^{c_0})$ is a subset of $H_{r_k}^{+}$ for 
\[r_k = \sin\left(\tfrac{\pi}{6}\right)e^{f^{t_k-1}(0)-\frac{\pi}{6}} = \tfrac{\sqrt{3}}{2}e^{-\frac{\pi}{6}}f^{t_k}(0).\]
By Lemma \ref{lem:from_left_to_right}, with $M$ large enough, we have $f^{t_k-1}(0)-\frac{\pi}{6} \geqslant \frac{k}{3}$. Therefore, assuming that $M$ is large enough again, we have
\begin{equation}\label{eq:r_k>k}
r_k>k\geqslant \lfloor M\rfloor+1.
\end{equation}
Let us consider branch of $f^{-t_k-1}$ which corresponds to the finite sequence $c_0c_1\ldots c_{t_k}$. Therefore, to obtained the covering $\mathcal{L}_{\c}^1$ we can pull back, with that branch of $f^{-t_k-1}$, all elements of $\mathcal{R}_{\sigma^{t_k+1}(\c)}^1$ intersecting $f^{t_k+1}(R_{-k}^{c_0})$. More precisely,
\begin{equation}
\mathcal{L}^1_{\c} = \{R_{-k}^{c_0}\cap f^{-t_k-1}(S): S\in\mathcal{R}_{\sigma^{t_k+1}(\c)}^1,\; f^{t_k+1}(R_k^{c_0})\cap S\neq\varnothing,\; k\geqslant \lfloor M\rfloor+1,\; l\in\Z \}.
\end{equation}
\begin{figure}[ht]
    \centering
    \begin{tikzpicture}[font=\small]
    
        \draw [fill=black!10] (-5,1.75) rectangle (-4.5,2.5);
        
        \draw node at (-5.5,2.1) {$R_{-k}^{c_0}$};

        \draw[dashed] (-3.65,3)--(-3.65,-0.1);
        \draw[solid] (-3.65,0.1)--(-3.65,-0.1);
        \draw node at (-3.65,-0.35) {$-M$};
        \draw[dashed] (3.65,3)--(3.65,-0.1);
        \draw[solid] (3.65,0.1)--(3.65,-0.1);
        \draw node at (3.75,-0.35) {$M$};

        \draw [black,fill=black!10] (0,0) circle (0.3);
        \draw [black,fill=white] (0,0) circle (0.12);

        \draw [black,fill=black!10] plot [smooth cycle] coordinates {(5,0.5) (5.25,2) (6.5, 1.8) (6.5,0.2) (5,-1.25) (4.5,-0.2)};
        
        \draw [black,fill=black!10] plot [smooth cycle] coordinates {(2.65,0) (3.1,0.4) (3.3, 0.1) (3,-0.4)};

        \draw[dashed,-Stealth] (-7,0)--(7,0);
        \draw[dashed,Stealth-] (0,3.75)--(0,-2.75);

        \draw[-Stealth] (0.3,0.3) to [bend left] (1,0.3);
        \draw node at (1.4, 0.2) {$...$};
        \draw[-Stealth] (1.75,0.3) to [bend left] (2.45,0.3);
        \draw node at (0.65, 0.7) {$f$};
        \draw node at (2.1, 0.7) {$f$};
        \draw[-Stealth] (3.75,0.3) to [bend left] (4.45,0.3);
        \draw node at (4.1, 0.7) {$f$};

        \draw [blue] (3,0) circle (0.47);
        \draw [blue,fill=blue] (3,0) circle (0.05);
        \draw [blue] node at (2, -2) {$B(f^{t_k-1}(0),\frac{\pi}{6})$};
        \draw[blue,-Stealth] (1.9,-1.6) to (2.65,-0.55);

        \draw node at (6.7, -1) {$f^{t_k+1}(R_{-k}^{c_0})$};

        \draw[-Stealth] (-4.25,2) to [bend left] (-0.4,0.3);
        \draw node at (-2,2) {$f$};

        \draw [red] (5,1) rectangle (5.5,1.75);
        \draw [fill=red, opacity=0.1] (5,1) rectangle (5.5,1.75);
        \draw [red] (6,1) -- (6,1.75);
        \draw [red] (6.5,1) -- (6.5,1.75);
        \draw [red] (7,1) -- (7,1.75);

        \draw [red] (5.5,1) -- (7,1);
        \draw [red] (5.5,1.75) -- (7,1.75);

        \draw[red] plot [smooth cycle] coordinates {(-4.98,2.3) (-4.98,2.1) (-4.75, 2) (-4.65,2.1) (-4.75,2.35)};
        \draw[fill=red, opacity=0.1] plot [smooth cycle] coordinates {(-4.98,2.3) (-4.98,2.1) (-4.75, 2) (-4.65,2.1) (-4.75,2.35)};

        \draw[-Stealth] (5.25,1.5) to [bend right] (-4.75,2.25);
        \draw node at (1.5,3.6) {$f^{-t_k-1}$};
        \draw [black] (-5,1.75) rectangle (-4.5,2.5);

        \draw [red] node at (7.25,2.15) {$R_l^{c_{t_k+1}}$};

    \end{tikzpicture}
    \caption{\footnotesize Construction of the covering $\mathcal{L}^1_{\c}$.}
    \label{fig:covering L}
\end{figure}

It remains to construct the \emph{middle} subfamily $\mathcal{M}_{\c}^1$. Let us begin by recalling the definition of $\tau$. For $z\in H_M^{\text{mid}}$ we have 
\[\tau(z)= \min\{k\geqslant 1 : f^k(z)\in H_M^+\} = \min\{k\geqslant 1 : f^k(z)\in E_0\}.\]
Denote by $\beta(\c,k)$ the finite sequence of integers of first $k$ entries of itinerary $\c$, i.e. $c_0,c_1,\ldots,c_{k-1}$. We will use the sets $E_k$ defined in Subsection \ref{subs:mid}. Those sets were defined on a half-plane $\Re(z)<M$, but we will only use them in the middle of the plane. Using the notions from Subsection \ref{subs:mid} we know that the family 
\[\{E_k^{\beta(\c,k)}\cap H_M^{\text{mid}}\}_{k\geqslant 1} = \{E_k^{\beta(\c,k)}(+)\cap H_M^{\text{mid}}\}_{k\geqslant 1} \cup \{E_k^{\beta(\c,k)}(-)\cap H_M^{\text{mid}}\}_{k\geqslant 1}\]
is a covering of $\La\cap H_M^{\text{mid}}$. 

To simplify notation we will always (only) consider the sets $E_k$ intersected with $H_M^{\text{mid}}$, i.e. instead of writing $E_k^{\beta(\c,k)}(\psi)\cap H_M^{\text{mid}}$ we put just $E_k^{\beta(\c,k)}(\psi)$.

According to Lemma $\ref{lem:eta^k}$ for every $k\geqslant 1$ and every symbol $\psi\in\{+,-\}$ we have
\[E_k^{\beta(\c,k)}(\psi) = \bigcup_{j=0}^{2} E_k^{\beta(\c,k)}(\psi, j),\]
where $E_k^{\beta(\c,k)}(\psi, j)=\{z\in E_k^{\beta(\c,k)}(\psi): |(f^{k+j})'(z)|\geqslant \eta^{k+j}\}$ for $j\in\{0,1,2\}$. 
Now, we define covering $\mathcal{M}_{\c}^1$ as 
\begin{align*}
\mathcal{M}_{\c}^1 = \{ f^{-k-j}(S)\cap E_k^{\beta(\c,k)}(\psi, j): & \; S\in\mathcal{R}_{\sigma^{k+j}(\c)}^1,\; f^{k+j}( E_k^{\beta(\c,k)}(\psi, j))\cap S\neq\varnothing, \\
& \; k\geqslant 1,\; j\in\{0,1,2\},\; \psi\in\{+,-\}  \},
\end{align*}
where again $f^{-k-j}$ is a branch which corresponds to $c_0,c_1,\ldots,c_{k+j-1}$. 

\begin{figure}[ht]
    \centering
    \begin{tikzpicture}[font=\small]

        \draw[dashed] (-3.65,3.5)--(-3.65,-3);
        \draw node at (-3.65,-3.35) {$-M$};
        \draw[dashed] (3.65,3.5)--(3.65,-3);
        \draw node at (3.75,-3.35) {$M$};
        
        \draw [black,fill=black!10] plot [smooth cycle] coordinates {(4,-0.3) (4.5,-0.1) (5, -0.15) (5.5,-0.5) (5.6,-1) (5.3,-1.5) (3.9,-1.6) (4.25,-1)};

        \draw [black,fill=black!10] plot [smooth cycle] coordinates {(-3.25,0.5) (-3,1) (-2.25, 0.8)(-2.5,0.45)};
        \draw [black] plot [smooth] coordinates {(-2.25, 0.8) (-2.1,0.75) (-1.7, 0.9) (-1.3,0.75) (-1.4,0.46) (-1.5, 0.43) (-1.9,0.6)(-2.5,0.45)};
        \draw node at (-0.6,0.75) {\tiny $E_k^{\beta(\c,k)}$};
        \draw [black!50] node at (-4.6,-1.75) {\tiny $E_k^{\beta(\c,k)}(\psi,j)$};
        \draw[-Stealth,black!50] (-4.5,-1.4) to (-3,0.7);

        \draw [black,fill=black!10] plot [smooth cycle] coordinates {(-1.5,2.4) (-1, 2.3) (-1, 2.2) (-1.2, 2.1) (-2,2)};
        \draw [black] plot [smooth] coordinates {(-0.975, 2.285) (-0.7,2.25) (0,2.6) (0.5,2.3) (0.3, 2) (-0.3, 2.2)(-1.2, 2.1)};
        \draw node at (1.5,2.25) {\tiny $E_{k-1}^{\beta(\sigma(\c),k-1)}$};

        \draw[-Stealth] (-2.9,1.1) to [bend left] (-2.2,2.05);
        \draw node at (-2.95, 1.65) {\tiny $f$};

        \draw[-Stealth] (0.38,1.92) to [bend right] (1.5,1.3);
        \draw node at (0.6, 1.3) {\tiny $f$};
        \draw node at (1.9, 1.3) {$...$};
        \draw[-Stealth] (2.3,1.3) to [bend left] (4.45,0.05);
        \draw node at (4.1, 0.9) {\tiny $f$};
        
        \draw[-Stealth] (-7,0)--(7,0);
        \draw[Stealth-] (0,3.75)--(0,-3);

        \draw[dotted] (-6.75,-1.125)--(6.75,-1.125);
        \draw[dotted] (-6.75,-0.375)--(6.75,-0.375);
        \draw[dotted] (-6.75,0.375)--(6.75,0.375);
        \draw[dotted] (-6.75,1.125)--(6.75,1.125);
        \draw[dotted] (-6.75,1.875)--(6.75,1.875);
        \draw[dotted] (-6.75,2.625)--(6.75,2.625);

        \draw node at (-5.5,0.7) {$P_{c_0}$};
        \draw node at (-5.5,2.2) {$P_{c_1}$};
        \draw node at (-5.5,-0.8) {$P_{c_{k+j}}$};

        \centerarc[dashed](0,0)(-29:36:6)
        \draw node at (4.3,3) {$E_0$};

        \draw [black!50] node at (2,-2) {\tiny $f^j(E_{k}^{\beta(\c,k)}(\psi,j))$};
        \draw[-Stealth, black!50] (3.2,-2)--(3.8,-1.65);

        \draw [red] (4.5,-1.125) rectangle (5,-0.375);
        \draw [fill=red, opacity=0.1] (4.5,-1.125) rectangle (5,-0.375);
        \draw [red] (5.5,-1.125) -- (5.5,-0.375);
        \draw [red] (6,-1.125) -- (6,-0.375);

        \draw [red] (4,-1.125) -- (4,-0.375);

        \draw [red] (4,-1.125) -- (4.5,-1.125);
        \draw [red] (5,-1.125) -- (6,-1.125);
        \draw [red] (4,-0.375) -- (4.5,-0.375);
        \draw [red] (5,-0.375) -- (6,-0.375);

        \draw[red] plot [smooth cycle] coordinates {(-2.8,0.7) (-2.7,0.8) (-2.5,0.75) (-2.6,0.6) (-2.75,0.55)};
        \draw[fill=red, opacity=0.1] plot [smooth cycle] coordinates {(-2.8,0.7) (-2.7,0.8) (-2.5,0.75) (-2.6,0.6) (-2.75,0.55)};

        \draw[-Stealth] (4.75,-0.75) to [bend left] (-2.65,0.65);
        \draw node at (-1.4,-0.75) {\tiny $f^{-k-j}$};

        \draw [red] node at (6.6,-0.75) {\tiny $R_l^{c_{k+j}}$};

    \end{tikzpicture}
    \caption{\footnotesize Construction of the covering $\mathcal{M}^1_{\c}$.}
    \label{fig:covering M}
\end{figure}

\subsection{Estimates on the first level covering}

Let us first estimate the sum
\[\sum_{Q\in \mathcal{R}^1_{\c}} \left(\diam Q\right)^{1+\delta}.\]
The number of elements of $\mathcal{R}_{\c}^1$ inside one rectangle $R_k^{c_0}\in\mathcal{F}^0_{\c}$ can be estimated as follows. The map $f$ is univalent in $R_k^{c_0}$ and the image of $R_k^{c_0}$ is annulus with radii $e^k$ and $e^{k+1}$, therefore there are at most $2\lceil e^{k+1}\rceil<2(e^{k+1}+1)$ such elements, giving
\begin{equation}\label{eq:R_no_ele_estimate}
\#\mathcal{R}_{\c}^1\big|_{R_{k}^{c_0}} \leqslant Ae^{k},
\end{equation}
for some $A>1$, where $\#\mathcal{R}_{\c}^1\big|_{R_{k}^{c_0}}$ denotes the number of elements of $\mathcal{R}_{\c}^1$ which are subsets of ${R_{k}^{c_0}}$. 

In general, we use the notation $\mathcal{V}\big|_{V}$, where $\mathcal{V}$ is a family of sets and $V$ is a set, to denote family of all elements of $\mathcal{V}$ which are a subsets of $V$.

Every element $S\in\mathcal{R}^1_{\c}$ is a subset of $f^{-1}(R_l^{c_1})$ for some $l$, so the diameter of $S$ can be bounded in the following way
\begin{equation}\label{eq:R_diam_estimate}
\diam(S)\leqslant \diam(R_l^{c_1})\cdot\sup \Big|\Big(f^{-1}\big|_{R_l^{c_1}\cap f(R_k^{c_0})}\Big)'\Big|\leqslant\frac{\sqrt{4\pi^2+1}}{\inf\Big|\Big(f\big|_{R_k^{c_0}}\Big)'\Big|} \leqslant Be^{-k},
\end{equation}
for some $B>1$.
Combining the above two estimations we obtain
\begin{equation}\label{eq:R<e^-k}
\sum_{Q\in \mathcal{R}_{\c}^1\big|_{R_{k}^{c_0}}} \left(\diam Q\right)^{1+\delta}\leqslant A\cdot B^{1+\delta} \cdot e^{-k\delta} \leqslant Ce^{-k\delta}
\end{equation}
with $C=AB^{2}$. 
Note that constants $A$, $B$, and $C$ does not depend on $M$.
We have
\[\sum_{Q\in \mathcal{R}^1_{\c}} \left(\diam Q\right)^{1+\delta}\leqslant\sum_{k=\lfloor M \rfloor}^{+\infty} C e^{-k\delta} \leqslant C\frac{e^{-\lfloor M \rfloor\delta}}{1-e^{-\delta}}.\]
According to \eqref{ineq:<<1}, we can guarantee that with $M$ large enough, we have
\begin{equation}\label{eq:R<1}
\sum_{Q\in \mathcal{R}^1_{\c}} \left(\diam Q\right)^{1+\delta} \leqslant \frac{1}{3}.
\end{equation}
Now, let us bound the value of the expression 
\[\sum_{Q\in \mathcal{L}^1_{\c}}\left(\diam Q\right)^{1+\delta}.\]
The construction of $\mathcal{L}^1_{\c}$ refers to already constructed $\mathcal{R}^1_{\c}$ and for that reason we can use estimates \eqref{eq:R_no_ele_estimate} and \eqref{eq:R_diam_estimate}.
Let $k\geqslant \lfloor M \rfloor+1$ and $R = R_{-k}^{c_0}$. Let $F=f^{t_k+1}$. First, we prove that $F$ has bounded distortion on $R$.

\begin{lem}\label{lem:distortion}
There exists $D=D(M)>e^{\frac{\pi}{3}+1}$ such that for every $k\geqslant \lfloor M \rfloor+1$ we have
\[\frac{\sup_{z\in R}|F'(z)|}{\inf_{z\in R}|F'(z)|}\leqslant D.\]
Moreover, with taking $M$ large enough, we can get $D$ arbitrarily close to $e^{\frac{\pi}{3}+1}$.
\end{lem}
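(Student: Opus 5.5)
We split $F=f^{t_k+1}$ on $R=R_{-k}^{c_0}$ as
\[F=f\circ f^{t_k-1}\circ f,\]
mirroring the estimate \eqref{eq:distortion_of_composition_v0} in the proof of Lemma~\ref{lem:eta^k}. The distortion of $F$ on $R$ is then at most the product of three factors: the distortion of $f$ on $R$, the distortion of $f^{t_k-1}$ on $f(R)$, and the distortion of $f$ on $f^{t_k}(R)$.

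The first factor is exactly $e$. Indeed $|f'(z)|=e^{\Re z}$, and $\Re z$ ranges over an interval of length $1$ on $R$.

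The third factor is at most $e^{\pi/3}$. By the definition of $t_k$, together with $R\subseteq\tilde H^-_{-k+1}$ and $f(R)\subseteq B(0,e^{-k+1})$, we have $f^{t_k}(R)\subseteq B\bigl(f^{t_k-1}(0),\frac{\pi}{6}\right)$. On this set $\Re$ varies by at most $\frac{\pi}{3}$, hence $|f'|$ varies by a factor at most $e^{\pi/3}$. (The minimality of $t_k$ controls $f^{t_k-1}(B(0,e^{-k+1}))$, which has diameter less than $\frac{\pi}{6}$ and contains $f^{t_k-1}(0)$; to land in a ball centred at the orbit of $0$ we use the version with $t_k$ images, exactly as in the text preceding \eqref{eq:r_k>k}.)

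The middle factor is the heart of the matter. We apply Lemma~\ref{lem:distortion_v0} with $A=f(R)$ and $n=t_k-1$. Its hypothesis holds because all images $f^m(f(R))$, $0\le m\le t_k-1$, stay in $B\bigl(f^{m}(0),\frac{\pi}{6}\bigr)$ by the definition of $t_k$ and the minimality argument above. The lemma gives the bound
\[\frac{(1+\theta)^4}{(1-\theta)^4},\qquad \theta=\frac{\pi}{6r},\quad r=f^{t_k-1}(0)-f^{t_k-2}(0).\]
Here the Koebe ball $B(f^{t_k-1}(0),r)$ is admissible because the inverse branch of $f^{t_k-1}$ along the orbit of $0$ extends univalently there. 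The ball avoids the negative real axis and the real orbit is increasing, so the usual pull-back along the real line works.

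The main obstacle is showing that $r$ is large, uniformly in $k\ge\lfloor M\rfloor+1$, and tends to infinity as $M\to\infty$. We use Lemma~\ref{lem:from_left_to_right}: for $k>M$ it gives $f^{t_k-1}(0)\gtrsim k/3$ (as already noted before \eqref{eq:r_k>k}). Then
\[r=f^{t_k-1}(0)-f^{t_k-2}(0)\ge f^{t_k-1}(0)-\log f^{t_k-1}(0)\ge \tfrac{k}{3}-\log k-O(1)\ge \tfrac{M}{4}\]
for $M$ large. Hence $\theta\le \frac{2\pi}{3M}$, and
\[D(M):=e\cdot e^{\pi/3}\cdot\frac{(1+\frac{2\pi}{3M})^4}{(1-\frac{2\pi}{3M})^4}\]
bounds the distortion for all $k$. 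It exceeds $e^{\frac{\pi}{3}+1}$ and tends to it as $M\to\infty$, which gives both claims of the lemma.

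One care point is that $t_k\ge 3$, so that the middle composition is nontrivial and $f^{t_k-2}(0)$ makes sense. This follows as in Lemma~\ref{lem:eta^k}, since for $k>M>100$ the first several images remain within $\frac{\pi}{6}$ of the orbit of $0$.
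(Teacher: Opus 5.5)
Your proposal is correct and follows the paper's proof: the same splitting $F=f\circ f^{t_k-1}\circ f$, with distortion factors $e$ for the first $f$, $(1+\theta)^4/(1-\theta)^4$ from Lemma~\ref{lem:distortion_v0} with $r=f^{t_k-1}(0)-f^{t_k-2}(0)$ for the middle map, and $e^{\pi/3}$ for the last $f$ because $f^{t_k}(R)\subseteq B\left(f^{t_k-1}(0),\frac{\pi}{6}\right)$. Your only addition is to make explicit that $r=x-\log x\geqslant M/4$ uniformly in $k$, where $x=f^{t_k-1}(0)\gtrsim k/3$ by Lemma~\ref{lem:from_left_to_right}; the paper merely asserts this when it says $\theta$ can be made arbitrarily small by enlarging $M$.
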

\begin{proof}
Let $\tilde{F}=f^{t_k-1}$ be the function defined on $f(R)$ and let $\varphi$ be the inverse of $\tilde{F}$. Let $r=f^{t_k-1}(0)-f^{t_k-2}(0)$. By Lemma \ref{lem:distortion_v0} we have
\begin{equation}\label{eq:distortion}
\frac{\sup_{z\in f(R)}|\tilde{F}'(z)|}{\inf_{z\in f(R)}|\tilde{F}'(z)|}\leqslant \tilde{D}(\theta) =  \frac{(1+\theta)^4}{(1-\theta)^4}.
\end{equation}
with $\theta=\frac{\pi}{6r}$.
Note that we can have arbitrarily small $\theta$ by taking $M$ large enough, because then $r$ become larger, and therefore $\tilde{D}(\theta)$ can be as close to $1$ as we want. To get estimation on the distortion of $F$ we simply use the fact that $F=f\circ \tilde{F}\circ f$ and we repeat reasoning presented in \eqref{eq:distortion_of_composition_v0}:
\begin{align}\label{eq:distortion_of_composition}
\frac{\sup_{z\in R}|F'(z)|}{\inf_{z\in R}|F'(z)|} &  \leqslant \frac{\sup_{z\in  R}|f'(z)|\cdot\sup_{z\in f(R)}|\tilde{F}'(z)|\cdot\sup_{z\in \tilde{F}(f(R))}|f'(z)|}{\sup_{z\in  R}|f'(z)|\cdot\inf_{z\in f(R)}|\tilde{F}'(z)|\cdot\inf_{z\in \tilde{F}(f(R))}|f'(z)|}\nonumber\\
& \leqslant e\cdot\tilde{D}\cdot\frac{e^{r+\frac{\pi}{6}}}{e^{r-\frac{\pi}{6}}} = \tilde{D}e^{\frac{\pi}{3}+1}.
\end{align}
Therefore $D=\tilde{D}e^{\frac{\pi}{3}+1}$ is the bound on distortion, which can be arbitrarily close to $e^{\frac{\pi}{3}+1}$, by taking $M$ large enough.
\end{proof}
\begin{cor}\label{cor:dist}
With the above estimation on distortion of $F$ on $R$ we have
    \[\frac{1}{\inf_{z\in R}|F'(z)|} \leqslant \frac{D}{\sup_{z\in R}|F'(z)|} \leqslant D \frac{\diam(R)}{\diam(F(R))}.\]
\end{cor}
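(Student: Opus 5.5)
The first inequality in Corollary~\ref{cor:dist} follows directly from Lemma~\ref{lem:distortion}, and the second from the mean value inequality applied to $F$ on $R$.

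\emph{First inequality.} By Lemma~\ref{lem:distortion} we have $\sup_{z\in R}|F'(z)|\leqslant D\inf_{z\in R}|F'(z)|$. Since $F=f^{t_k+1}$ is a finite composition of exponentials, $F'$ never vanishes, so both quantities are positive. Dividing the inequality by $\inf_{z\in R}|F'(z)|\cdot\sup_{z\in R}|F'(z)|$ gives
\[\frac{1}{\inf_{z\in R}|F'(z)|}\leqslant\frac{D}{\sup_{z\in R}|F'(z)|}.\]

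\emph{Second inequality.} It suffices to show $\diam(F(R))\leqslant \sup_{z\in R}|F'(z)|\cdot\diam(R)$. The rectangle $R=R_{-k}^{c_0}$ is convex. For any $z,w\in R$ the segment $[z,w]$ lies in $R$, so integrating $F'$ along it gives
\[|F(z)-F(w)|\leqslant \sup_{R}|F'|\cdot|z-w|\leqslant \sup_{R}|F'|\cdot\diam(R).\]
Taking the supremum over $z,w$ yields the claim. Rearranged, this reads $1/\sup_R|F'|\leqslant \diam(R)/\diam(F(R))$. Multiplying by $D$ gives the second inequality.

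Two minor points need care. First, $R$ is only half-open, so the segment argument is applied to points of $R$, which is enough since convexity still holds. Alternatively, one can pass to the closure, where $|F'|$ is continuous, so the supremum is unchanged. Second, $\diam(F(R))$ must be positive and finite. It is positive because $F$ is nonconstant, and finite because $F(R)$ is bounded: $F(R)$ lies in the image of a bounded ball under $f$. The only genuine input is therefore the convexity of $R$.
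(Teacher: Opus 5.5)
Your proof is correct and matches the paper's (implicit, one-line) justification: the first inequality is a rearrangement of the distortion bound from Lemma~\ref{lem:distortion}, and the second is the mean value inequality $\diam(F(R))\leqslant \sup_{R}|F'|\cdot\diam(R)$ on the convex rectangle $R$. Your remarks on positivity of the infimum and on finiteness of $\diam(F(R))$ are adequate; both also follow from compactness of $\overline{R}$ and the fact that $F'$ never vanishes.
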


From now on, let $D>0$ be the bound on the distortion of $F$. Note that $D$ can be chosen independently of $M$, i.e. enlarging $M$ does not require changing $D$.

Let here $\tilde{c}=c_{t_k + 1}$ and $\tilde{\mathcal{R}}_l = \mathcal{R}_{\sigma^{t_k+1}(\c)}^1\big|_{R_l^{\tilde{c}}}$.
By the definition of $t_k$ the set $F(R)$ is contained in both $H_{r_k}^+$ and $H_{p_k}^-$ where 
$p_k = f^{t_k}(0)\cdot e^{\frac{\pi}{6}}$.
To obtain a covering of $R$ we pull back elements of $\mathcal{R}_{\sigma^{t_k+1}(\c)}^1$ which intersect $F(R)$. There exist $l_{\min}$ and $l_{\max}$ such that all such elements of $\mathcal{R}_{\sigma^{t_k+1}(\c)}^1$ are elements of \[\bigcup_{l=l_{\min}}^{l_{\max}}\tilde{\mathcal{R}}_{l}.\]
Moreover, $l_{\min}$ and $l_{\max}$ can be chosen in such a way that
\[l_{\max} - l_{\min} - 2 \leqslant \diam F(R) \leqslant l_{\max} - l_{\min} + 2\pi,\]
simply by taking largest possible $l_{\min}$ and smallest possible $l_{\max}$. Thus, $l_{\max} - l_{\min}\leqslant E\diam F(R)$ for some $E>0$. We have $l_{\min}\geqslant k\geqslant \lfloor M \rfloor+1$ because $r_k > k \geqslant \lfloor M \rfloor+1$.

Finally, we can estimate
\begin{align}\label{eq:estimation_on_L1}
\sum_{Q\in \mathcal{L}^1_{\c}\big|_{R}} (\diam Q)^{1+\delta}
&\leqslant \sum_{l=l_{\min}}^{l_{\max}} \sum_{S\in \tilde{\mathcal{R}}_l} \bigl(\diam F^{-1}(S)\bigr)^{1+\delta} \nonumber 
\leqslant \sum_{l=l_{\min}}^{l_{\max}} A e^{l}
\bigg(
\frac{B e^{-l}}
{\inf \big| \big(F\big|_{F^{-1}(R_l^{\tilde{c}})}\big)' \big|}
\bigg)^{1+\delta} \nonumber \\
&\leqslant \sum_{l=l_{\min}}^{l_{\max}} AB^{1+\delta}e^{-\delta l} \left(\frac{D\cdot\diam(R)}{\diam(F(R))}\right)^{1+\delta} \nonumber \\
&\leqslant CD^{1+\delta}E e^{-k\delta}\frac{\diam(F(R))\cdot \diam(R) ^ {1+\delta}}{\diam(F(R)) ^ {1+\delta}}  \nonumber \\
&\leqslant CD^{1+\delta}E\cdot \sqrt{4\pi^2+1}^{1+\delta}\cdot \left(\tfrac{6}{\pi}\right)^\delta \cdot e^{-k\delta}  \leqslant\tilde{C}e^{-k\delta}.
\end{align}

First inequality follows from the fact that every considered $Q$ is subset of the preimage of some $S$ which belongs to $\tilde{\mathcal{R}}_l$ for some $l$ between $l_{\min}$ and $l_{\max}$. To obtain next inequality we use inequalities $\eqref{eq:R_no_ele_estimate}$ and $\eqref{eq:R_diam_estimate}$. In third inequality we use Corollary \ref{cor:dist}. In the fourth inequality we use the property $l_{\max} - l_{\min}\leqslant E\cdot\diam F(R)$, the fact that $l_{\min}\geqslant k$ and the definition of $C$, where we again assume that $M$ is large enough. The penultimate inequality follows from the definition of $t_k$, which implies that $\diam(F(R))\geqslant \frac{\pi}{6}$, and the fact that $\diam(R)=\sqrt{4\pi^2+1}$. In the last inequality we use $\delta < 1$, and we define the new constant
\[\tilde{C} = CD^2E(4\pi^2+1)\tfrac{6}{\pi}.\]
Again, with $M$ large enough, we have
\begin{equation}\label{eq:L<1}
\sum_{Q\in \mathcal{L}^1_{\c}} (\diam Q)^{1+\delta} \leqslant \sum_{k=\lfloor M \rfloor+1}^{+\infty} \tilde{C}e^{-k\delta} \leqslant \tilde{C}\frac{e^{-\lfloor M \rfloor\delta}}{1-e^{-\delta}} \leqslant \frac{1}{3}.
\end{equation}
Finally, we turn to the middle subfamily. Let us estimate
\[\sum_{Q\in \mathcal{M}^1_{\c}} (\diam Q)^{1+\delta},\]
which is equal to
\[\sum_{p= 1}^{+\infty}\sum_{Q\in \mathcal{M}^1_{\c}\big |_{E_p^{\beta(\c,p)}}} (\diam Q)^{1+\delta}.\]
The middle sum can be expressed as
\begin{align*}
\sum_{Q\in \mathcal{M}^1_{\c}\big |_{E_p^{\beta(\c,p)}}} (\diam Q)^{1+\delta} & = 
\sum_{Q\in \mathcal{M}^1_{\c}\big |_{E_p^{\beta(\c,p)}(+)}} (\diam Q)^{1+\delta} + \sum_{Q\in \mathcal{M}^1_{\c}\big |_{E_p^{\beta(\c,p)}(-)}} (\diam Q)^{1+\delta} \\
& \leqslant \sum_{\substack{\psi \in \{+,-\} \\ j \in \{0,1,2\}}} \sum_{Q\in \mathcal{M}^1_{\c}\big |_{E_p^{\beta(\c,p)}(\psi,j)}} (\diam Q)^{1+\delta}.
\end{align*}
For a given $\psi \in \{+,-\}$ and $j \in \{0,1,2\}$ we have
\begin{align*}
\sum_{Q\in \mathcal{M}^1_{\c}\big |_{E_p^{\beta(\c,p)}(\psi,j)}} (\diam Q)^{1+\delta} & \leqslant \sum_{k=\lfloor M \rfloor}^{+\infty} \sum_{S\in\mathcal{R}_{\sigma^{p+j}(\c)}^1\big |_{R_k^{c_{p+j}}}} (\diam(f^{-p-j}(S)))^{1+\delta} \\
& \leqslant \sum_{k=\lfloor M \rfloor}^{+\infty} AB^{1+\delta}e^{-k\delta}\frac{1}{\eta^{(p+j)(1+\delta)}}
 \leqslant \frac{C}{\eta^{p(1+\delta)}}\frac{e^{-\lfloor M \rfloor\delta}}{1-e^{-\delta}}.
\end{align*}
The first inequality follows from the fact that every considered set $Q$ is a subset of $f^{-p-j}(S)$ for $S$ being element of $\mathcal{R}_{\sigma^{p+j}(\c)}^1$ in some rectangle $R_k^{c_{p+j}}$, $k\geqslant \lfloor M \rfloor$. The second inequality follows from the \eqref{eq:R_no_ele_estimate}, \eqref{eq:R_diam_estimate} and Lemma \ref{lem:eta^k}.
Therefore, 
\[\sum_{Q\in \mathcal{M}^1_{\c}} (\diam Q)^{1+\delta}
\leqslant 6\cdot C\cdot \frac{e^{-\lfloor M \rfloor\delta}}{1-e^{-\delta}}\cdot\sum_{p=1}^{+\infty}\frac{1}{\eta^{p(1+\delta)}} \leqslant \frac{6\cdot C}{\eta^{1+\delta}-1} \cdot\frac{e^{-\lfloor M \rfloor\delta}}{1-e^{-\delta}} \leqslant \hat{C}\frac{e^{-\lfloor M \rfloor\delta}}{1-e^{-\delta}}.\]
Last inequality defines constant $\hat{C}$.

Once again, with $M$ large enough we have
\begin{equation}\label{eq:M<1}
\sum_{Q\in \mathcal{M}^1_{\c}} (\diam Q)^{1+\delta}
\leqslant \frac{1}{3}.
\end{equation}
So, from this point on we assume that $M$ is large enough that
\[\max\{C,\tilde{C}, \hat{C}\}\cdot \frac{e^{-\lfloor M \rfloor\delta}}{1-e^{-\delta}} \leqslant \frac{1}{3}.\]

\subsection{Construction of the covering at any level}
Now, we construct coverings $\mathcal{F}_{\c}^n$ at any level $n\geqslant 2$. We follow the idea from the construction of the first level covering. Again, we consider three subfamilies $\mathcal{R}^n_{\c}$, $\mathcal{L}^n_{\c}$, $\mathcal{M}^n_{\c}$ such that
$\mathcal{F}^n_{\c} = \mathcal{R}^n_{\c} \cup \mathcal{L}^n_{\c} \cup \mathcal{M}^n_{\c}$
and they are covering of $\La\cap H_M^{+}$, $\La\cap H_M^{-}$ and $\La\cap H_M^{\text{mid}}$ respectively.

For the first one, $\mathcal{R}^n_{\c}$, let $V\in\mathcal{R}^1_{\c}$. The image of $V$ is a subset of some rectangle $R_{l}^{c_1}$, by definition of $\mathcal{R}^1_{\c}$, which contains elements of covering $\mathcal{F}^{n-1}_{\sigma(\c)}$. We pull back these sets with $f^{-1}$ as in the construction of $\mathcal{R}_{\c}^1$, leading to
\[
\mathcal{R}^n_{\c} = \{V\cap f^{-1}(S): f(V)\cap S \neq\varnothing,\; V\in\mathcal{R}^1_{\c},\;S\in\mathcal{F}_{\sigma(\c)}^{n-1} \}.
\]
Recall 
$
\mathcal{F}^{n-1}_{\sigma(\c)}
=
\mathcal{R}^{n-1}_{\sigma(\c)}
\cup
\mathcal{L}^{n-1}_{\sigma(\c)}
\cup
\mathcal{M}^{n-1}_{\sigma(\c)}$ and observe that for a given $V \in \mathcal{R}^1_{\c}$ the image of $V$ usually intersects only elements of one of the families
$\mathcal{R}^{n-1}_{\sigma(\c)}$, $\mathcal{L}^{n-1}_{\sigma(\c)}$, $\mathcal{M}^{n-1}_{\sigma(\c)}$ --
since $V$ is contained in a rectangle $R_l^{c_1}$ for some $l$. Only for $l=\lfloor M\rfloor$ or $l=-\lfloor M\rfloor - 1$ the set $V$ may intersect members of two families from $\mathcal{R}^{n-1}_{\sigma(\c)}$, $\mathcal{L}^{n-1}_{\sigma(\c)}$, $\mathcal{M}^{n-1}_{\sigma(\c)}$, but this exceptional case does not affect the argument (some of the sets have to be counted twice).

Now, we need to define $\mathcal{L}^{n-1}_{\c}$. Using the notation
\[t_k=\min\left\{n\in\N: \diam(f^n(B(0,e^{-k+1})))\geqslant \tfrac{\pi}{6}\right\}\] 
we define
\begin{equation}
\mathcal{L}^n_{\c} = \{V\cap f^{-t_k-1}(S):\; f^{t_k+1}(V)\cap S\neq\varnothing,\; V\in\mathcal{L}^1_{\c} , \;S\in\mathcal{R}_{\sigma^{t_k+1}(\c)}^{n},\},
\end{equation}
where $k$ is such that $V\subseteq R_{-k}^{c_0}$, and we consider branch of $f^{-t_k-1}$ which corresponds to the finite sequence $c_0c_1\ldots c_{t_k}$. 

Finally, we define the covering $\mathcal{M}_{\c}^n$ by 
\begin{align*}
\mathcal{M}_{\c}^n = \{ V \cap E_k^{\beta(\c,k)}(\psi, j) \cap f^{-k-j}(S): & \;f^{k+j}( V\cap E_k^{\beta(\c,k)}(\psi, j))\cap S\neq\varnothing,\\
& \; V\in\mathcal{M}^1_{\c}, \; S\in\mathcal{R}_{\sigma^{k+j}(\c)}^{n},\;  \\
& \; k\geqslant 1,\; j\in\{0,1,2\},\; \psi\in\{+,-\}  \},
\end{align*}
where again $f^{-k-j}$ is a branch which corresponds to $c_0,c_1,\ldots,c_{k+j-1}$.

Note that in the definition above for any chosen $V$ there is only one set of parameters $\psi$ and $j$ for which the intersection is non-empty (i.e. we don't really choose all the possibilities for $j$ and $\psi$).

The order of the constructions is thus as follows. We start with the $\mathcal{F}_{\c}^0$ which is the family of rectangles. Using this family we construct $\mathcal{R}_{\c}^1$. Based only on $\mathcal{R}_{\c}^1$ we construct both $\mathcal{L}_{\c}^1$, $\mathcal{M}_{\c}^1$, and therefore we complete the construction of $\mathcal{F}_{\c}^1$. Then we can construct $\mathcal{R}_{\c}^2$, based on entire family $\mathcal{F}_{\c}^1$ (and $\mathcal{F}_{\c}^0$), and the inductive process of constructing coverings continues.

\subsection{Estimates on the covering at any level}
Let the inductive assumption be that for $n\geqslant 1$, every exponentially bounded $\c$, and every $k\geqslant \lfloor M \rfloor$ we have
\[\sum_{Q\in\mathcal{F}_{\c}^{n}} (\diam Q)^{1+\delta} \leqslant 1\]
and
\[\sum_{Q\in\mathcal{R}_{\c}^{n}\big|_{R_{k}^{c_0}}} (\diam Q)^{1+\delta} \leqslant Ce^{-k\delta}.\]
Notice that the first condition is satisfied for $n=1$ due to \eqref{eq:R<1}, \eqref{eq:L<1}, and \eqref{eq:M<1}, and the second condition is satisfied for $n=1$ due to \eqref{eq:R<e^-k}.
Now, let us suppose that these condition are also satisfied for some $n\geqslant 1$.

First, let us consider the rectangle $R_k^{c_0}$ for $k\geqslant \lfloor M \rfloor$. For every $V\in\mathcal{F}_{\c}^1\big|_{R_k^{c_0}} = \mathcal{R}_{\c}^1\big|_{R_k^{c_0}}$, by definition of $\mathcal{R}_{\c}^1$, we have $f(V)\subseteq R_{m}^{c_1}$ for $m=m(V)\in\Z$. Therefore 
\begin{align*}
\sum_{Q\in\mathcal{R}_{\c}^{n+1}\big|_{R_k^{c_0}}} (\diam Q)^{1+\delta} & = \sum_{V\in\mathcal{R}_{\c}^1\big|_{R_k^{c_0}}} \sum_{S\in\mathcal{F}_{\sigma(\c)}^{n}\big|_{R_m^{c_1}}} (\diam(f^{-1}(S)\cap V))^{1+\delta} \\
& \leqslant \sum_{V\in\mathcal{R}_{\c}^1\big|_{R_k^{c_0}}} \sum_{S\in\mathcal{F}_{\sigma(\c)}^{n}\big|_{R_m^{c_1}}} \bigg(\frac{\diam(S)}{\inf\big|\big(f\big|_{V}\big)'\big|}\bigg)^{1+\delta} \\
& = \sum_{V\in\mathcal{R}_{\c}^1\big|_{R_k^{c_0}}} \frac{1}{\big(\inf\big|\big(f\big|_{V}\big)'\big|\big)^{1+\delta}}\cdot\sum_{S\in\mathcal{F}_{\sigma(\c)}^{n}\big|_{R_m^{c_1}}} (\diam(S))^{1+\delta} \\
& \leqslant Ae^k \cdot (Be^{-k})^{1+\delta} \leqslant C e^{-k\delta}.
\end{align*}
In the first equality we use the fact that every considered $Q$ is, by definition of $\mathcal{R}_{\c}^n$, of the form $f^{-1}(S)\cap V$ for some $V\in\mathcal{R}_{\c}^1\big|_{R_k^{c_0}}$ and $S\in\mathcal{F}_{\c}^{n}\big|_{R_m^{c_1}}$. Next two steps, first inequality and second equality, follow from the estimation of diameter of image of set $S$ under $f^{-1}$ and reordering of terms, respectively. To obtain the second inequality we estimate the first sum using \eqref{eq:R_no_ele_estimate} and estimation on derivative of $f$ presented in \eqref{eq:R_diam_estimate}, since $V$ is a subset of $R_k^{c_0}$, and we bound the second sum by $1$ using the inductive assumption. The last step follows from the definition of $C$. We proved the second part of the inductive assertion. Using \eqref{eq:R<1} we obtain
\[
\sum_{Q\in\mathcal{R}_{\c}^{n+1}} (\diam Q)^{1+\delta} \leqslant \sum_{k=\lfloor M \rfloor}^{+\infty} C e^{-k\delta}  = C \frac{e^{-\lfloor M \rfloor\delta}}{1-e^{-\delta}} \leqslant \frac{1}{3}.
\]

Now, let us prove the same for $\mathcal{L}_{\c}^{n+1}$ and $\mathcal{M}_{\c}^{n+1}$. Let us consider the rectangle $R = R_{-k}^{c_0}$ for $k\geqslant \lfloor M \rfloor+1$. Let $F=f^{t_k + 1}$, $\tilde{\c} = \sigma^{t_k+1}(\c)$, and $\tilde{c} = c_{t_k+1}$. For every $V\in\mathcal{F}_{\c}^1\big|_{R} = \mathcal{L}_{\c}^1\big|_{R}$, by definition of $\mathcal{L}_{\c}^1$ and $t_k$, and the estimation $r_k > k$, we have 
\[f^{t_k+1}(V)\subseteq W(V) \subseteq R_{m}^{\tilde{c}}\]
for $W(V)\in\mathcal{R}_{\tilde{\c}}^1$ and $m=m(V)\geqslant k$. Moreover, $W(V)$ is injective. To construct $\mathcal{L}_{\c}^1\big|_R$ we needed to pull back with $F$ only these elements of $\mathcal{R}_{\tilde{\c}}^1$ which intersect $F(R)$. Thus we can assume that $l_{\min} \leqslant m \leqslant l_{\max}$ with $l_{\max} - l_{\min} \leqslant E\cdot\diam(F(R))$ for some $E>0$.
For every such $m$ let the family $\mathcal{V}_m$ be the set of these $V\in\mathcal{L}_{\c}^{1}\big|_{R}$ for which $f^{t_k+1}(V)\subseteq R_{m}^{\tilde{c}}$.
Now, we can perform the following estimation
\begin{align*}
\sum_{Q\in\mathcal{L}_{\c}^{n}\big|_{R}} (\diam Q)^{1+\delta} & = \sum_{V\in\mathcal{L}_{\c}^{1}\big|_{R}} \sum_{S\in\mathcal{R}_{\tilde{\c}}^{n}\big|_{W(V)}} (\diam(F^{-1}(S)\cap V))^{1+\delta}  \\
& = \sum_{m=l_{\min}}^{l_{\max}} \sum_{V\in\mathcal{V}_{m} }\sum_{S\in\mathcal{R}_{\tilde{\c}}^{n}\big|_{W(V)}} (\diam(F^{-1}(S)\cap V))^{1+\delta} \\
& \leqslant \sum_{m=l_{\min}}^{l_{\max}} \sum_{S\in\mathcal{R}_{\tilde{\c}}^{n}\big|_{R_m^{\tilde{c}}}} (\diam(F^{-1}(S)\cap R))^{1+\delta} \\
& \leqslant \sum_{m=l_{\min}}^{l_{\max}} \sum_{S\in\mathcal{R}_{\tilde{\c}}^{n}\big|_{R_m^{\tilde{c}}}} \bigg(\frac{\diam(S)}{\inf\big|\big(F\big|_{R}\big)'\big|}\bigg)^{1+\delta} \\
& = \sum_{m=l_{\min}}^{l_{\max}} \frac{1}{\left(\inf\big|\big(F\big|_{R}\big)'\big|\right)^{1+\delta}}\cdot\sum_{S\in\mathcal{R}_{\tilde{\c}}^{n}\big|_{R_m^{\tilde{c}}}} (\diam(S))^{1+\delta} \\
& \leqslant  \sum_{m=l_{\min}}^{l_{\max}} \frac{1}{\left(\inf\big|\big(F\big|_{R}\big)'\big|\right)^{1+\delta}} \cdot Ce^{-m\delta}  \leqslant  Ce^{-k\delta}\cdot \sum_{m=l_{\min}}^{l_{\max}} \frac{1}{\left(\inf\big|\big(F\big|_{R}\big)'\big|\right)^{1+\delta}} \\
& \leqslant  Ce^{-k\delta}\cdot E\cdot\diam(F(R)) \left(\frac{D\cdot\diam(R)}{\diam(F(R))}\right)^{1+\delta} \\
& \leqslant  CD^{1+\delta}E\cdot\sqrt{4\pi^2+1}^{1+\delta}\cdot\left(\tfrac{6}{\pi}\right)^{1+\delta}\cdot e^{-k\delta}   \leqslant  \tilde{C}e^{-k\delta}  \\
\end{align*}
First equality follows from the fact that every considered $Q$ is subset of the preimage of some $S$ which belongs to $\mathcal{R}_{\tilde{\c}}^1$ and is a subset of $W(V)$. Next equality and inequality follows from the definition of the subfamilies $\mathcal{V}_m$. Next, we estimate $\diam(S)$ using derivative of $F$. Then we split sums, and in the third and fourth inequalities we use inductive assumption and the fact that $m\geqslant k$. The fifth inequality follows from the fact that $l_{\max}-l_{\min} \leqslant E\cdot\diam(R)$ and Corollary \ref{cor:dist}. In the penultimate inequality we use $\diam(R)\leqslant\sqrt{4\pi^2+1}$ and $\diam(F(R))\geqslant\frac{\pi}{6}$. In last step we use the fact that $\delta<1$ and the definition of $\tilde{C}$ from \eqref{eq:estimation_on_L1}. As in the \eqref{eq:L<1} we obtain
\[\sum_{Q\in \mathcal{L}^1_{\c}} (\diam Q)^{1+\delta} \leqslant \sum_{k=\lfloor M \rfloor+1}^{+\infty} \tilde{C}e^{-k\delta} \leqslant \tilde{C}\frac{e^{-\lfloor M \rfloor\delta}}{1-e^{-\delta}} \leqslant \frac{1}{3}.\]

Finally, we prove the required estimation for $\mathcal{M}_{\c}^{n}$. For $k\geqslant 1$, $\psi\in\{+,-\}$, $j\in\{0,1,2\}$ let $F=f^{k+j}$, $\tilde{\c} = \sigma^{k+j}(\c)$, and $\tilde{c} = c_{k+j}$. For every $V\in\mathcal{F}_{\c}^1\big|_{E_{k}^{\beta(\c, k)}(\psi,j)} = \mathcal{M}_{\c}^1\big|_{E_{k}^{\beta(\c, k)}(\psi,j)}$, by definition of $\mathcal{M}_{\c}^1$, we have 
\[f^{k+j}(V)\subseteq W(V) \subseteq R_{m}^{\tilde{c}}\]
for $W(V)\in\mathcal{R}_{\tilde{\c}}^1$ and $m=m(V)\geqslant \lfloor M\rfloor$. Moreover, $W(V)$ is injective. By the proof of Lemma \ref{lem:eta^k} we have also $m\leqslant \left\lceil e^{e^{e^M}} \right\rceil= M'$. We get the following
\begin{align*}
\sum_{Q\in\mathcal{M}_{\c}^1\big|_{E_{k}^{\beta(\c, k)}(\psi,j)}} (\diam Q)^{1+\delta} & = \sum_{V\in\mathcal{M}_{\c}^1\big|_{E_{k}^{\beta(\c, k)}(\psi,j)}} \sum_{S\in\mathcal{R}_{\tilde{\c}}^{n}\big|_{W(V)}} (\diam(F^{-1}(S)\cap V))^{1+\delta} \\
& \leqslant \sum_{m=\lfloor M \rfloor}^{M'} \sum_{S\in\mathcal{R}_{\tilde{\c}}^{n}\big|_{R_m^{\tilde{c}}}} \bigg(\frac{\diam(S)}{\inf\big|\big(F\big|_{E_{k}^{\beta(\c, k)}(\psi,j)}\big)'\big|}\bigg)^{1+\delta} \\
& = \sum_{m=\lfloor M \rfloor}^{M'}  \frac{1}{\left(\inf\big|\big(F\big|_{V}\big)'\big|\right)^{1+\delta}}\cdot\sum_{S\in\mathcal{R}_{\tilde{\c}}^{n}\big|_{R_m^{\tilde{c}}}} (\diam(S))^{1+\delta} \\
& \leqslant \sum_{m=\lfloor M \rfloor}^{M'}  \frac{1}{\eta^{(k+j)(1+\delta)}}\cdot Ce^{-m\delta} \\
& \leqslant \frac{C}{\eta^{(k+j)(1+\delta)}} \cdot \frac{e^{-\lfloor M \rfloor\delta}}{1-e^{-\delta}} \leqslant \frac{C}{\eta^{k(1+\delta)}} \cdot \frac{e^{-\lfloor M \rfloor\delta}}{1-e^{-\delta}}\\
\end{align*}
First equality, first inequality, and second equality follows the logic of the analogous estimation for $\mathcal{L}_{\c}^n$. Then, we use inductive assumption, and then we use Lemma \ref{lem:eta^k} to bound the derivative of $F$. Summing over $k\geqslant 1$, $\psi\in\{+,-\}$ and $j\in\{0,1,2\}$ we obtain
\[\sum_{Q\in \mathcal{M}^1_{\c}} (\diam Q)^{1+\delta}
\leqslant 6\cdot C\cdot \frac{e^{-\lfloor M \rfloor\delta}}{1-e^{-\delta}}\cdot\sum_{p=1}^{+\infty}\frac{1}{\eta^{p(1+\delta)}} \leqslant \frac{6\cdot C}{\eta^{1+\delta}-1} \cdot \frac{e^{-\lfloor M \rfloor\delta}}{1-e^{-\delta}} \leqslant \hat{C}\frac{e^{-\lfloor M \rfloor\delta}}{1-e^{-\delta}} \leqslant \frac{1}{3}.\]
Having all three estimates we end the proof of the first part of the inductive assumption, i.e. for every $n$ we have
\[\sum_{Q\in\mathcal{F}_{\c}^{n}} (\diam Q)^{1+\delta} \leqslant 1.\]
Trivially from the construction and the estimates on the derivative we also have the second part of the assumption, 
\[\sup_{Q\in\mathcal{F}_{\c}^{n}}\diam(Q) \to 0 \mbox{\quad as $n\to +\infty$}.\]
Thus, taking $\mathcal{F}_{\c}^{n}$ as a covering of $\La$, and taking $n\to +\infty$ proves that $\dim_H(\La) \leqslant 1+\delta$. Taking $\delta \to 0$ (equivalently $M\to+\infty$) finishes the proof.
\bibliographystyle{acm}
\bibliography{biblio.bib}
\end{document}